\numberwithin{equation}{section}
\newcommand{\fa}{\mathfrak{a}}
\newcommand{\fb}{\mathfrak{b}}
\newcommand{\fm}{\mathfrak{m}}
\newcommand{\fp}{\mathfrak{p}}
\newcommand{\fu}{\mathfrak{U}}
\newcommand{\fP}{\mathfrak{P}}
\newcommand{\fq}{\mathfrak{q}}
\newcommand{\ft}{\mathfrak{T}}
\newtheorem{thm}{Theorem}[section]
\newtheorem{cor}[thm]{Corollary}
\newtheorem{lem}[thm]{Lemma}
\newtheorem{prop}[thm]{Proposition}
\newtheorem{defn}[thm]{Definition}
\newtheorem{defn rem}[thm]{Definition and Remark}
\newtheorem{rem rem}[thm]{Reminder and Remark}
\newtheorem{exam}[thm]{Example}
\newtheorem{rem}[thm]{Remark}
\newtheorem{rem defn}[thm]{Remark and Definition}
\newtheorem{remi}[thm]{Reminder}
\newtheorem{nota}[thm]{Notation}
\newtheorem{con not}[thm]{Convention and Notation}
\numberwithin{equation}{section}
\begin{document}

\bibliographystyle{amsplain}

\author{Markus Brodmann}
\address{University of Z\"urich, Mathematics Institute, Winterthurerstrasse 190, 8057 Z\"urich. }
\email{brodmann@math.uzh.ch}

\author{Maryam Jahangiri}
\address{School of Mathematics and Computer Sciences, Damghan University, Damghan, Iran-and-School of Mathematics, Institute for research in Fundamental
Sciences (IPM), P.O.Box 19395-5746, Tehran, Iran.}
\email{jahangiri.maryam@gmail.com}

\thanks{\today }
\thanks{ The second author was in part supported by a grant from IPM (No. 89130115).}


\subjclass[2000]{}

\title[TAME LOCI OF CERTAIN LOCAL COHOMOLOGY MODULES]
 {TAME LOCI OF CERTAIN LOCAL COHOMOLOGY MODULES}
\begin{abstract} Let $M$ be a finitely generated graded module over a Noetherian homogeneous ring
$R = \bigoplus_{n \in \mathbb{N}_0}R_n$. For each $i \in \mathbb{N}_0$ let $H^i_{R_{+}}(M)$ denote the $i$-th local cohomology
module of $M$ with respect to the irrelevant ideal $R_+ = \bigoplus_{n > 0} R_n$ of $R$, furnished with its natural grading.
We study the tame loci $\ft^i(M)^{\leq 3}$ at level $i \in \mathbb{N}_0$ in codimension $\leq 3$ of $M$, that is the sets
of all primes $\fp_0 \subset R_0$ of height $\leq 3$ such that the graded $R_{\fp_0}$-modules $H^i_{R_{+}}(M)_{\fp_0}$ are tame.

\noindent
\end{abstract}
\maketitle


\section{ Introduction}

Throughout this note let  $ R=\bigoplus _{n \geq0} R_{n}$ be a
homogeneous Noetherian ring. So, $R$ is an $\mathbb{N}_{0} $-graded
$R_{0}$-algebra and $R= R_{0}[l_{1}, ..., l_{r}]$ with finitely many
elements $l_{1}, ..., l_{r} \in R_{1}$.
Moreover, let $R_{+}:=\bigoplus _{n>0}R_{n}$ denote the irrelevant
ideal of $R$ and let $M$ be a finitely generated graded $R$-module.
For each $i\in\mathbb{N}_{0}$ let $H^{i}_{R_{+}}(M)$ denote the
$i$-th local cohomology module of $M$
with respect to $R_{+}$. It is well known, that the $R$-module $H^{i}_{R_{+}}(M)$ carries a
natural grading and that the graded components $H^{i}_{R_{+}}(M)_{n}$ are finitely generated
$R_{0}$-modules which vanish for all $n\gg0$ (s. \cite{BS}, \S15 for example). So,
the $R_{0}$-modules $H^{i}_{R_{+}}(M)_{n}$ are asymptotically trivial if $n\rightarrow +\infty$.

On the other hand a rich variety of phenomena occurs for the modules $H^i_{R_{+}}(M)_n$ if $i \in \mathbb{N}_0$
is fixed and $n \rightarrow -\infty$. So, it is quite natural to investigate the \textit{ asymptotic behaviour
of cohomology}, e.g.the mentioned phenomena (s. \cite{B2}).

One basic question in this respect is to ask for the \textit{asymptotic stability of associated primes},
more precisely the question, whether for given $i \in \mathbb{N}_0$ the set $\mathrm{Ass}_{R_0}(H^i_{R_{+}}(M)_n)$ (or some
of its specified subsets) ultimately becomes independent of $n$, if $n\rightarrow -\infty$. In many particular
cases this is indeed the case (s. \cite{B1}, \cite{BFL}, \cite{BFT}, \cite{BH}), partly even in a more general setting
(s. \cite{JZ}). On the other hand it is known for quite a while, that the asymptotic stability of associated primes
also may fail in many even surprisingly ``nice'' cases by various examples (s. \cite{BFT}, \cite{BKS} and also \cite{B2}),
which rely on the constructions given in \cite{S} and \cite{SS}.

Another related question is, whether for fixed $i\in \mathbb{N}_0$ certain numerical invariants of the $R_0$-modules
$H^i_{R_{+}}(M)_n$ ultimately become constant if $n\rightarrow -\infty$. A number of such \textit{asymptotic stability results
for numerical invariants} are indeed known (s. \cite{BB}, \cite{BR}, \cite{BRS} and also \cite{HJZ}).

The oldest - and most challenging - question around the asymptotic behaviour of cohomology was the so-called \textit{tameness
problem}, that is the question, whether for fixed $i \in \mathbb{N}_0$ the $R_0$-modules $H^i_{R_{+}}(M)_n$ are either always
vanishing for all $n \ll 0$ or always non-vanishing for all $n \ll 0$. This question seems to have raised already in relation with
Marley's paper \cite{M}. In a number of cases, this tameness problem was shown to have an affirmative answer (s. \cite{B2}, \cite{BH},
\cite{L}, \cite{RS}).

Nevertheless by means of a duality result for bigraded modules given in \cite{HR}, Cutkosky and Herzog \cite{CH} constructed
an example which shows that the tameness-problem can have a negative answer also. In \cite{CCHS} an even more striking counter-example
is given: a Rees-ring $R$ of a three-dimensional local domain $R_0$ of dimension $4$, which is essentially of finite type over a field
such that the graded $R$-module $H^2_{R_{+}}(R)$ is not tame.

The present paper is devoted to the study of the tame loci $\ft^i(M)$ of $M$, that is the sets of all primes $\fp_0
\in \mathrm{Spec}(R_0)$ for which the graded $R_{\fp_0}$-module $H^i_{R_{+}}(M)_{\fp_0} \cong H^i_{{(R_{\fp_0})}_{+}}(M_{\fp_0})$
is tame. This loci have been studied already in \cite{RS}. We restrict ourselves to the case in which the base ring $R_0$ is
essentially of finite type over a field, as in this situation asymptotic stability of associated primes holds in codimension $\leq 2$.
As shown by Chardin-Jouanolou, this latter asymptotic stability result holds under the weaker assumption that $R_0$
is a homomorphic image of a Noetherian ring which is locally Gorenstein (oral communication by M. Chardin). So all results of our
paper remain valid if $R_0$ is subject to this weaker condition.

One expects, that in such a specific situations the tame loci $\ft^i(M)$ show some ``usual'' well-behaviour, like being
open for example. But as we shall see in Example~\ref{2.5} this is wrong in general. Namely, using the counter-example given
in \cite{CCHS} we construct an example of graded $R$-module $M$ of dimension $4$ whose $2$-nd tame locus $\ft^2(M)$ is
not even stable under generalization. This shows in particular, that the tame loci $\ft^i(M)$ need not be open in codimension
$\leq 4$. The example of \cite{CCHS} also shows, that the tame loci $\ft^i(M)$ need not contain all primes
$\fp_0 \in \mathrm{Spec}(R_0)$ of height $3$. Therefore we shall focus to the ``border line case'' and investigate
the sets $\ft^i(M)^{\leq 3}$ of all primes $\fp_0 \in \ft^i(M)$ of height $\leq 3$.

In Section 2 of this paper we recall a few basic facts on the asymptotic stability of associated primes which shall
be used constantly in our arguments. In this section we also introduce the so called \textit{critical sets}
$C^i(M) \subset \mathrm{Spec}(R_0)$ which consist of primes of height $3$ and have the property that all primes $\fp_0 \notin C^i(M)$
of height $\leq 3$ belong to the tame locus $\ft^i(M)$ (s. Proposition~\ref{2.8} (b)). Moreover the finiteness of the set
$C^i(M)$ has the particularly nice consequence that $M$ is \textit{uniformly tame at level $i$ in codimension $\leq 3$}, e.g.
there is an integer $n_0$ such that for each $\fp_0 \in \ft^i(M)^{\leq 3}$ the ${(R_0)}_{\fp_0}$-module
$(H^i_{R_{+}}(M)_n)_{\fp_0}$ is either vanishing for all $n \leq n_0$ or non-vanishing for all $n \leq n_0$
(s. Proposition~\ref{2.8} (c)).

In Section 3 we give some finiteness criteria for the critical sets $C^i(M)$. Here, we assume in addition that the base
ring $R_0$ is a domain, so that the intersection $\fa^i(M)$ of all non-zero primes $\fp_0 \subset R_0$ which are associated
to $H^i_{R_{+}}(M)$ is a non-zero ideal by a result of \cite{BFL}. Our main result says, that the critical set $C^i(M)$ is
finite, if $\fa^i(M)$ contains a \textit{quasi-non-zero divisor} with respect to $M$ (s. Theorem~\ref{3.4}). This obviously
applies in particular to the case in which $M$ is torsion-free as an $R_0$-module in all large degrees or at all
(s. Corollary~\ref{3.5} resp. Corollary~\ref{3.7}). In order to force a situation as required in Theorem~\ref{3.4} one
is tempted to replace $M$ by $M/{\Gamma}_{(x)}(M)$ for some non-zero element $x \in R_0$. We therefore give a
comparison result for the critical sets $C^i(M)$ and $C^i(M/{\Gamma}_{(x)}(M))$ (s. Proposition~\ref{3.7}).
As an application we prove that the critical sets $C^i(M)$ are finite if $R_0$ is a domain and the $R_0$-module $M$
asymptotically satisfies some weak ``unmixedness condition'' (s. Corollary~\ref{3.8}).

In our final Section 4 we give a few conditions for the \textit{tameness at level $i$ in codimension $\leq 3$} in terms
of the ``asymptotic smallness'' of the graded $R$-modules $H^{i-1}_{R_{+}}(M)$ and $H^{i+1}_{R_{+}}(M)$.  We first prove
that all primes $\fp_0 \subset R_0$ of height $\leq 3$ belong to the tame locus $\ft^i(M)$, provided that
$\mathrm{dim}_{R_0}(H^{i-1}_{R_{+}}(M)_n) \leq 1$ and $\mathrm{dim}_{R_0}(H^{i+1}_{R_{+}}(M)_n) \leq 2$ for all $n \ll 0$
(s. Theorem~\ref{4.2}). In addition we show that $M$ is tame at almost all primes $\fp_0 \subset R_0$ of height $\leq 3$
provided that $R_0$ is a domain and  $\mathrm{dim}_{R_0}(H^{i-1}_{R_{+}}(M)_n) \leq 0$ for all $n \ll 0$ (s. Theorem~\ref{4.4}).
We actually prove in both cases slightly sharper statements namely: the corresponding graded $R_{\fp_0}$-modules
$H^i_{R_{+}}(M)_{\fp_0}$ are not only tame, but even what we call \textit{almost Artinian}. Using this terminology
we get in particular the following conclusion. If $R_0$ is a domain and the graded $R$-module $H^{i-1}_{R_{+}}(M)$ is almost
Artinian, then for almost all primes $\fp_0 \in \mathrm{Spec}(R_0)$ of height $\leq 3$ either the ${(R_0)}_{\fp_0}$-module
$(H^i_{R_{+}}(M)_n)_{\fp_0}$ is of dimension $> 0$ for all $n \ll 0$ or else the graded $R_{\fp_0}$-module $H^i_{R_{+}}(M)_{\fp_0}$
is almost Artinian (s. Corollary~\ref{4.5}).

\section{Tame Loci in Codimension $\leq 3$}

 We keep the previously introduced notations.

\begin{con not}\label{2.1}

\rm{(A) Throughout this section we convene that the base ring $R_{0}$
of our Noetherian homogeneous ring $R= R_{0}\bigoplus R_{1}\bigoplus ...$
is essentially of finite type over some field. So, $R_0 = S^{-1}A$, where $A = K[a_1,\ldots,a_s]$ is a finitely generated algebra over some field $K$,
$S \subseteq A$ is multiplicatively closed and there are finitely many elements $l_1,\ldots,l_r \in R_1$ such that $R = R_0[l_1,\ldots,l_r]$.
\\(B) If $n\in\mathbb{N}_{0}$ and $\fP \subseteq \text{Spec}(R_{0})$ we
write}
$$ \fP^{=n} :=\{\fp_{0}\in \fP \mid  \mathrm{height}(\fp_{0})= n\} $$
$$ \fP^{\leq n} :=\{\fp_{0}\in \fP \mid  \mathrm{height}(\fp_{0})\leq n\}. $$
\end{con not}

\begin{rem rem}\label{2.2}

\rm{(A) According to \cite{B0} for all  $n \ll 0$ the set $\mathrm{Ass}_{R_{0}}(M_{n})$ is equal to the set
$\{\fp \cap R_0 \mid \fp \in \mathrm{Ass}_R \cap \mathrm{Proj}(R)\}$ and hence
asymptotically stable for $n\rightarrow \infty$, thus:

\begin{quote} \textit{There is a least integer $m(M) \geq 0$ and a finite set}
$\mathrm{Ass}_{R_{0}}^{*}(M)\subseteq \mathrm{Spec}(R_{0})$  \textit{such that}
$\mathrm{Ass}_{R_{0}}(M_{n})= \mathrm{Ass}_{R_{0}}^{*}(M)$ \textit{for all} $n>
m(M)$.
\end{quote}

(B) Let $f(M)$ denote the finiteness dimension of $M$ with respect
to $R_{+}$, that is "the least integer" for which the $R$-module $
H^{i}_{R_{+}}(M)$ is not finitely generated. Clearly we may write
$$ f(M)= \inf \{i\in \mathbb{N}_{0} \mid \sharp \{ n\in \mathbb{Z} \mid H^{i}_{R_{+}}(M)_{n}\neq 0\}=\infty\}.$$

(C) Keep in mind that $f(M)> 0$. According to [BH, Theorem 5.6] we know that the set
$\mathrm{Ass}_{R_{0}}(H^{f(M)}_{R_{+}}(M)_{n})$ is asymptotically stable for $n\rightarrow -\infty$:
\begin{quote} \textit{There is a largest integer $n(M) \leq 0$ and a finite set} $\fu
(M)\subseteq \mathrm{Spec}(R_{0})$ \textit{such that}
$\mathrm{Ass}_{R_{0}}(H^{f(M)}_{R_{+}}(M)_{n})= \fu (M)$ \textit{for all} $n \leq n(M)$.
\end{quote}

In particular
$$ \mathrm{Supp} _{R_{0}}(H^{f(M)}_{R_{+}}(M)_{n})= \overline{\fu(M)}, \quad \forall n \leq n(M),$$
where $\overline{\bullet}$ denotes the formation of the topological closure in Spec$(R_{0})$.

(D) According to [B1, Theorem 4.1] we know that for each $i\in \mathbb{N}_{0}$ the set\\
$\mathrm{Ass}_{R_{0}}(H^{i}_{R_{+}}(M)_{n})$ is asymptotically stable in codimension $\leq 2$ for $n\rightarrow -\infty$:
\begin{quote} \textit{For each $i\in \mathbb{N}_{0}$ there is a largest integer $n^{i}(M) \leq 0$ and a finite set}
$\fP^{i}(M)\subseteq \mathrm{Spec}(R_{0})^{\leq 2}$  \textit{such that}
$\mathrm{Ass}_{R_{0}}(H^{i}_{R_{+}}(M)_{n})^{\leq 2}= \fP^{i}(M)$ \textit{for all} $n\leq n^{i}(M)$.
\end{quote}
Now, combining this with the observations made in parts (B) and (C) we obtain:}
\begin{quote} {$(i)\, i< f(M) \Rightarrow \forall n\leq n^{i}(M): H^{i}_{R_{+}}(M)_{n}= 0;
\\(ii) \, \forall n\leq n(M): \mathrm{Supp}_{R_{0}}(H^{f(M)}_{R_{+}}(M)_{n})= \overline{\fu (M)};
\\(iii) \, i> f(M)\Rightarrow \forall n\leq n^{i}(M):\mathrm{Supp}_{R_{0}}(H^{i}_{R_{+}}(M)_{n})^{\leq 2}=
\overline{\fP^{i}(M)}^{\leq 2}$.}
\end{quote}
\end{rem rem}

\begin{defn rem}\label{2.3}

\rm{(A) Let $i\in \mathbb{N}_{0}$. We say that the finitely generated graded $R$-module $M$ is
\textit{(cohomologically) tame at level $i$}
if the graded  $R$-module  $H^{i}_{R_{+}}(M)$ is tame, e.g.
$$ \exists n_{0}\in \mathbb{Z} : (\forall n\leq n_{0}: H^{i}_{R_{+}}(M)_{n}= 0)
\vee (\forall n\leq n_{0}: H^{i}_{R_{+}}(M)_{n}\neq 0).$$

(B) Let $\fp_{0}\in \text{Spec}(R_{0})$. We say that $M$ is \textit{(cohomologically) tame at level $i$ in $\fp_{0}$} if
the graded $R_{\fp_{0}}$-module $M_{\fp_{0}}$ is cohomologically tame at level $i$. In view of the graded flat base change
property of local cohomology
it is equivalent to say that the graded $R_{\fp_{0}}$-module $H^{i}_{R_{+}}(M)_{\fp_{0}}$ is tame.

(C) We define the \textit{$i$-th (cohomological) tame locus of $M$} as the set $\ft^{i}(M)$
of all primes $\fp_{0}\in \text{Spec}(R_{0})$ such that  $M$ is
(cohomologically) tame at level $i$ in $\fp_{0}$. So, if $\fp_{0}\in \mathrm{Spec}(R_{0})$ we have
$$\fp_{0}\in \ft^{i}(M)\Leftrightarrow \exists n_{0}\in \mathbb{Z} :  \left\lbrace
\begin{array}{c l}
\forall n\leq n_{0}: \fp_{0}\in \mathrm{Supp}_{R_{0}}(H^{i}_{R_{+}}(M)_{n})\\
or\\
\forall n\leq n_{0}: \fp_{0}\notin \mathrm{Supp}_{R_{0}}(H^{i}_{R_{+}}(M)_{n})
\end{array}
\right.$$

If $k\in \mathbb{N}_{0}$, the set $\ft^{i}(M)^{\leq k}$ is called the
\textit{$i$-th (cohomological) tame locus of $M$ in codimension $\leq k$}.

(D) Let $\fu \subseteq \mathrm{Spec}(R_{0})$. We say that $M$ is \textit{(cohomologically) tame at level $i$ along $\fu$},
if $\fu\subseteq \ft^{i}(M)$. We say that $M$ is \textit{uniformly
(cohomologically) tame at level $i$ along $\fu$} if there is an integer $n_{0}$
such that for all $\fp_{0}\in \fu$
$$\big(\forall n \leq n_0: \fp_{0}\in \mathrm{Supp}_{R_{0}}(H^{i}_{R_{+}}(M)_{n}\big) \vee \big(\forall n \leq n_0: \fp_{0}\notin
\mathrm{Supp}_{R_{0}}(H^{i}_{R_{+}}(M)_{n}\big).$$

(E) If $M$ is uniformly tame at level $i$ along the set $\fu \subseteq \mathrm{Spec}(R_{0})$, then it is tame along $\fu$
at level $i$.}
\end{defn rem}

\begin{rem}\label{2.4}
\rm{(A)} According to Reminder and Remark~\ref{2.2} (D) (i) and (ii) we have
\begin{quote} \textit{$M$ is uniformly tame along} \rm{Spec}$(R_{0})$ \textit{at all levels} $i\leq f(M)$.
\end{quote}

(B) Using the notation of Reminder and Remark~\ref{2.2} (A) we write $\mathrm{Supp}_{R_{0}}^{*}(M):=
\overline{\mathrm{Ass}_{R_{0}}^{*}(M)}$ so that
$\mathrm{Supp}_{R_{0}}(M_{n})= \mathrm{Supp}_{R_{0}}^{*}(M)$  for all $n\geq m(M)$.
Now, on use of Reminder and Remark~\ref{2.2} (D) it follows easily:
\begin{quote} \textit{ For all $i> f(M)$, the module $M$ is uniformly tame at
level $i$ along the set $W^{i}(M):= (\mathrm{Spec}(R_{0})\setminus \mathrm{Supp}_{R_{0}}^{*}(M))
\cup \overline{\fP^{i}(M)}\cup \mathrm{Spec}(R_{0})^{\leq 2}$.}
\end{quote}
It follows in particular that $W^{i}(M)\subseteq \ft^{i}(M)$, and moreover, for all $i\in \mathbb{N}_{0}$:
\begin{quote} \textit{(i) $M$ is uniformly tame at level $i$ along the set $\mathrm{Spec}(R_{0})^{\leq 2}$.
\\ (ii) $\ft^{i}(M)^{\leq 3}$ is stable under generalization.}
\end{quote}
\end{rem}

If the graded $R$-module $T= \bigoplus _{n \in \mathbb{Z}} T_{n}$ is tame,
and $\fp_{0}\in \text{Spec}(R_{0})$, then the graded $R_{\fp_{0}}$-module $T_{\fp_{0}}$ need not to be tame any more.
This hints that in general the loci
$\ft^{i}(M)$ could be non-stable under generalization. We now present such an example.

\begin{exam}\label{2.5}

\rm{Let $K$ be algebraically closed. Then according to [CCHS], there
exists a normal homogeneous Noetherian domain $R'= \bigoplus _{n
\geq 0} R'_{n}$ of dimension 4 such that $(R_{0}', \fm_{0}')$ is
local, of dimension 3 with $R_{0}'/\fm_{0}'= K$ and such
that for all negative integers $n$ we have $H^{2}_{R'_{+}}(R')_{n}=
K^{2}$ if $n$ is even and $H^{2}_{R'_{+}}(R')_{n}= 0$ if $n$ is odd.

Now, let $l_{1}, ..., l_{r}\in R'_{1}$ be such that $R'_{1}= \sum
_{i=1}^{r}R'_{0}l_{i}$. Let $x, x_{1}, ..., x_{r} $ be
indeterminates, let $R_{0}$ denote the 4-dimensional local domain
$R_{0}'[x]_{(\fm_{0}', x)}$ with maximal ideal $\fm_{0}:= (\fm_{0}',
x)R_{0}'$, consider the homogeneous $R_{0}$-algebras $R:=
R_{0}[x_{1}, ..., x_{r}]$ and $\overline{R}:= R_{0}\otimes_{R_{0}'
}R'$ together with the surjective graded homomorphism of
$R_{0}$-algebras
\[\Phi: R= R_{0}[x_{1}, ..., x_{r}]\twoheadrightarrow \overline{R}; \quad x_{i}\mapsto 1_{R_{0}}\otimes l_{i}.\]
Now, let $\alpha \in  \fm_{0}'\backslash \{0\}$, let
$t$ be a further indeterminate, consider the Rees algebra \[S=
R_{0}[xt, (x+ \alpha)t]= \bigoplus_{ n\geq 0}((x,
x+\alpha)R_{0})^{n}\] and the surjective graded homomorphism of
$R_{0}$-algebras
\[\Psi: R\twoheadrightarrow S, \quad x_{1}\mapsto xt, \quad x_{2}\mapsto (x+\alpha)t, \quad x_{i}\mapsto 0  \text{ if $i\geq 3$}.\]
We consider $\overline{R}$ and $S$ as graded $R$-modules by means of
$\Phi$ and $\Psi$ respectively. Then $M:= \overline{R}\oplus S$ is a
finitely generated graded $R$-module which is in addition
torsion-free over $R_{0}$.

By the graded Base Ring Independence and Flat Base Change properties
of local cohomology we get isomorphisms of graded $R$-modules \[
H^{2}_{R_{+}}(\overline{R})\cong R_{0}
\otimes_{R_{0}'}H^{2}_{R_{+}'}(R'), \,\,\,\ H^{2}_{R_{+}}(S)\cong
H^{2}_{S_{+}}(S).\] As $\text{cd}_{S_{+}}(S)=
\text{dim}(S/\fm_{0}S)= 2$
we have $H^{2}_{S_{+}}(S)_{n}\neq 0$ for all $n\ll 0$. It follows that $H^{2}_{R_{+}}(M)_{n}\cong
H^{2}_{R_{+}}(\overline{R})_{n}\oplus  H^{2}_{S_{+}}(S)_{n}\neq 0$
for all $n\ll 0$ and so $M$ is tame at level 2. In particular we
have $\fm_{0}\in \ft^{2}(M)$.

Now, consider the prime $\fp_{0}:= \fm_{0}'R_{0}\in
\mathrm{Spec}(R_{0})^{=3}$. Then, for each $n< 0$ we have
\[(H^{2}_{R_{+}}(\overline{R})_{n})_{\fp_{0}}\cong (R_{0})_{\fm_{0}'R_{0}}\otimes_{R_{0}'} H^{2}_{R_{+}'}(R')_{n}\cong
\left\lbrace
\begin{array}{c l}
  K(x)^{2}, \text{if $n$ is even};\\
  0, \,\,\,\,\,\,\,\,\,\,\, \text{if $n$ is odd}.
\end{array}
\right.\]
Moreover $S_{\fp_{0}}= (R_{0})_{\fp_{0}}[(x,
x+\alpha)(R_{0})_{\fp_{0}}t]= (R_{0})_{\fp_{0}}[t]$ shows that
$H^{2}_{S_{+}}(S)_{\fp_{0}}\cong
H^{2}_{(S_{\fp_{0}})_{+}}(S_{\fp_{0}})= 0$. It follows that
$(H^{2}_{R_{+}}(M)_{n})_{\fp_{0}}$ vanishes precisely for all odd
negative integers $n$. So $H^{2}_{R_{+}}(M)_{\fp_{0}}$ is not tame
and hence $\fp_{0}\notin \ft^{2}(M)$.

Observe in particular that here $\ft^{2}(M)= \ft^{2}(M)^{\leq 4}$ is
not stable under generalization, and that $R_{0}$ is a domain and
the graded $R$-module $M$ is torsion-free over $R_{0}$. On the other
hand $\ft^{i}(M)^{\leq 3}$ is always stable under generalization,
(cf. Remark~\ref{2.4} (B) (ii)).}
\end{exam}

One of our aims is to show that quite a lot can be said about  the
sets $\ft^{i}(M)^{\leq 3}$ if the base ring $R_{0}$ is a domain and
$M$ is torsion-free over $R_{0}$. Indeed, we shall attack the
problem in a more general context, beginning with the following
result, in which $\fP^{i}(M)$ is defined according to Definition and Remark~\ref{2.2} (D).

\begin{lem}\label{2.6}

Let $i\in \mathbb{N}_{0}$ and let $n^i(M)$ be defined as in Reminder and Remark~
\ref{2.2} (D). Then for all $n\leq n^{i}(M)$ we have
\rm{\[C^i_n(M) := \big(\text{Supp}
_{R_{0}}(H^{i}_{R_{+}}(M)_{n})\backslash
\overline{\fP^{i}(M)}\big)^{\leq
3}=\big(\text{Ass}_{R_{0}}(H^{i}_{R_{+}}(M)_{n})\backslash
\overline{\fP^{i}(M)}\big)^{= 3}.\]}
\end{lem}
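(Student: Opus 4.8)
The plan is to fix $i\in\mathbb{N}_{0}$ and $n\leq n^{i}(M)$, to abbreviate $T:=H^{i}_{R_{+}}(M)_{n}$ (a finitely generated $R_{0}$-module), and to use essentially a single input from the previous material: by Reminder and Remark~\ref{2.2}~(D) we have $\mathrm{Ass}_{R_{0}}(T)^{\leq 2}=\fP^{i}(M)$, so in particular $\fP^{i}(M)\subseteq \mathrm{Ass}_{R_{0}}(T)\subseteq \mathrm{Supp}_{R_{0}}(T)$, and, since $\fP^{i}(M)$ is a finite set, $V(\fq_{0})\subseteq\overline{\fP^{i}(M)}$ for every $\fq_{0}\in\fP^{i}(M)$. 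With this in hand, the inclusion ``$\supseteq$'' is immediate: any $\fp_{0}$ in the right-hand set lies in $\mathrm{Ass}_{R_{0}}(T)\subseteq\mathrm{Supp}_{R_{0}}(T)$, does not lie in $\overline{\fP^{i}(M)}$, and has height $3\leq 3$, hence belongs to the left-hand set.

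For the inclusion ``$\subseteq$'' I would take $\fp_{0}\in\mathrm{Supp}_{R_{0}}(T)$ with $\mathrm{height}(\fp_{0})\leq 3$ and $\fp_{0}\notin\overline{\fP^{i}(M)}$, and choose a minimal prime $\fq_{0}$ of $\mathrm{Supp}_{R_{0}}(T)$ with $\fq_{0}\subseteq\fp_{0}$; recall that such a $\fq_{0}$ automatically belongs to $\mathrm{Ass}_{R_{0}}(T)$. If we had $\fq_{0}\subsetneq\fp_{0}$, then $\mathrm{height}(\fq_{0})\leq\mathrm{height}(\fp_{0})-1\leq 2$, hence $\fq_{0}\in\mathrm{Ass}_{R_{0}}(T)^{\leq 2}=\fP^{i}(M)$ and therefore $\fp_{0}\in V(\fq_{0})\subseteq\overline{\fP^{i}(M)}$, contradicting the choice of $\fp_{0}$. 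Thus $\fq_{0}=\fp_{0}$, which already shows $\fp_{0}\in\mathrm{Ass}_{R_{0}}(T)$. Finally, if $\mathrm{height}(\fp_{0})\leq 2$ we would get $\fp_{0}\in\mathrm{Ass}_{R_{0}}(T)^{\leq 2}=\fP^{i}(M)\subseteq\overline{\fP^{i}(M)}$, again a contradiction; so $\mathrm{height}(\fp_{0})=3$, and $\fp_{0}$ lies in the right-hand set, as required.

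I do not expect a genuine obstacle here: the statement is essentially formal once $\mathrm{Ass}_{R_{0}}(T)^{\leq 2}=\fP^{i}(M)$ is available. The only points that require a little care are the standard facts being invoked --- that the minimal primes of the support of a finitely generated module over a Noetherian ring are associated primes, that a strict inclusion of primes strictly decreases height, and that $V(\fq_{0})\subseteq\overline{\fP^{i}(M)}$ whenever $\fq_{0}\in\fP^{i}(M)$ --- together with the bookkeeping of the ``$\leq 3$'' versus ``$=3$'' superscripts. Note also that the degenerate case $T=0$ (which occurs, for instance, whenever $i<f(M)$ and $n\leq n^{i}(M)$) is subsumed without extra work: both sides of the asserted equality are then empty, consistently with $\fP^{i}(M)=\emptyset$ in that situation.
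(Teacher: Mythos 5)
Your proof is correct and follows essentially the same route as the paper's: pick a prime $\fq_{0}\in\mathrm{Ass}_{R_{0}}(H^{i}_{R_{+}}(M)_{n})$ below $\fp_{0}$, use $\fp_{0}\notin\overline{\fP^{i}(M)}$ to rule out $\fq_{0}\in\fP^{i}(M)=\mathrm{Ass}_{R_{0}}(H^{i}_{R_{+}}(M)_{n})^{\leq 2}$, and conclude $\mathrm{height}(\fq_{0})\geq 3$, hence $\fq_{0}=\fp_{0}$ is an associated prime of height exactly $3$. The only cosmetic difference is that you phrase the key step as two separate contradictions rather than the paper's single direct chain of implications.
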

\begin{proof}
Let $n\leq n^{i}(M)$ and $\fp_{0}\in \big((\mathrm{Supp}_{R_{0}}(H^{i}_{R_{+}}(M)_{n})\backslash \overline{\fP^{i}(M)}\big)^{\leq 3}$.
Then, there is some $\fq_{0}\in \mathrm{Ass}_{R_{0}}(H^{i}_{R_{+}}(M)_{n})$ with $\fq_{0}\subseteq \fp_{0}$.
As $\fp_{0}\notin \overline{\fP^{i}(M)}$ we have $\fq_{0}\notin\fP^{i}(M)= \mathrm{Ass}_{R_{0}}(H^{i}_{R_{+}}(M)_{n})^{\leq 2}$.
It follows that $\mathrm{height} (\fq_{0}) \geq 3$, hence $\fq_{0}= \fp_{0}$ and therefore
$$\fp_{0}\in \mathrm{Ass}_{R_{0}}(H^{i}_{R_{+}}(M)_{n})^{=3}.$$
This proves the inclusion $"\subseteq"$. The converse inclusion is obvious.
\end{proof}

\begin{defn}\label{2.7}

\rm{Let $i \in \mathbb{N}_{0}$ and let $n^i(M)$ and $C^i_n(M)$ be as in Lemma~\ref{2.6}. Then the set
$$ C^i(M) := \bigcup_{n \leq n^i(M)} C^i_n(M)$$
is called the \textit{$i$-th critical set of $M$}}.
\end{defn}

\begin{prop}\label{2.8}
Let $i \in \mathbb N_{0}$. Then
\\{\rm(a)} $M$ is uniformly tame at level $i$ along the set
\[ [\big(\mathrm{Spec}(R_{0})\setminus \mathrm{Supp}^{*}_{R_{0}}(M)\big)\cup
\overline{\fP^{i}(M)}\cup \mathrm{Spec}(R_{0})^{\leq 3}]\setminus
C^{i}(M).\]
\\{\rm(b)} $\ft^{i}(M)^{\leq 3}\supseteq \mathrm{Spec}(R_{0})^{\leq 3}\setminus C^{i}(M).$
\\{\rm(c)} The following statements are equivalent:
\\{\rm(i)} $C^{i}(M)$ is a finite set;
\\{\rm(ii)} $\ft^{i}(M)^{\leq 3}$ is open in
$\mathrm{Spec}(R_{0})^{\leq 3}$ and $M$ is uniformly tame at level $i$ along $\ft^{i}(M)^{\leq 3}$.
\\{\rm(iii)} $\mathrm{Spec}(R_{0})^{\leq 3} \setminus \ft^{i}(M)$ is finite and $M$ is uniformly tame at
level $i$ along $\ft^{i}(M)^{\leq 3}$.
\end{prop}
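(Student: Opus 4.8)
The plan is to establish (a), deduce (b) at once, and then prove the three equivalences in (c) cyclically; the whole argument rests on the Lemma~\ref{2.6} description of $C^{i}(M)$ together with the finiteness of $\mathrm{Ass}_{R_{0}}(H^{i}_{R_{+}}(M)_{n})$ for each fixed $n$. For (a) I would distinguish two cases. If $i \leq f(M)$, then Remark~\ref{2.4}(A) tells us that $M$ is uniformly tame at level $i$ along all of $\mathrm{Spec}(R_{0})$, hence also along the subset occurring in (a). If $i > f(M)$, write the set in (a) as the union of its intersection with $W^{i}(M) = (\mathrm{Spec}(R_{0}) \setminus \mathrm{Supp}_{R_{0}}^{*}(M)) \cup \overline{\fP^{i}(M)} \cup \mathrm{Spec}(R_{0})^{\leq 2}$ (along which $M$ is uniformly tame at level $i$ by Remark~\ref{2.4}(B), say with bound $n_{1}$) and of those primes $\fp_{0}$ of the set in (a) not lying in $W^{i}(M)$. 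Such a $\fp_{0}$ necessarily satisfies $\fp_{0} \notin \overline{\fP^{i}(M)}$, $\mathrm{height}(\fp_{0}) = 3$ and $\fp_{0} \notin C^{i}(M)$, and then the description of the sets $C^{i}_{n}(M)$ in Lemma~\ref{2.6} forces $\fp_{0} \notin \mathrm{Supp}_{R_{0}}(H^{i}_{R_{+}}(M)_{n})$ for all $n \leq n^{i}(M)$; hence with the common bound $n_{0} := \min\{n_{1}, n^{i}(M)\}$ every prime of the set in (a) is either in $\mathrm{Supp}_{R_{0}}(H^{i}_{R_{+}}(M)_{n})$ for all $n \leq n_{0}$ or in none of them, which is exactly uniform tameness. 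Part (b) follows immediately, since $\mathrm{Spec}(R_{0})^{\leq 3} \setminus C^{i}(M)$ is contained in the set of (a), so by (a) and Definition and Remark~\ref{2.3}(E) all of its (height $\leq 3$) primes lie in $\ft^{i}(M)$.

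For (c) I would argue (i)$\Rightarrow$(ii)$\Rightarrow$(iii)$\Rightarrow$(i). Assume (i), so $C^{i}(M)$ is finite; recall from Lemma~\ref{2.6} that every prime of $C^{i}(M)$ lies outside $\overline{\fP^{i}(M)}$ and has height exactly $3$. By (b) one has $\ft^{i}(M)^{\leq 3} = (\mathrm{Spec}(R_{0})^{\leq 3} \setminus C^{i}(M)) \cup (\ft^{i}(M)^{\leq 3} \cap C^{i}(M))$; part (a) provides a common tameness bound on the first piece, and the second piece is finite with each of its primes carrying its own tameness bound, so taking the minimum yields uniform tameness along $\ft^{i}(M)^{\leq 3}$. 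Moreover $Z := \mathrm{Spec}(R_{0})^{\leq 3} \setminus \ft^{i}(M)^{\leq 3} \subseteq C^{i}(M)$ by (b), so $Z$ is finite and consists of primes of height $3$, and any such finite set is closed in $\mathrm{Spec}(R_{0})^{\leq 3}$ because $V(\fq_{0}) \cap \mathrm{Spec}(R_{0})^{\leq 3} = \{\fq_{0}\}$ whenever $\mathrm{height}(\fq_{0}) = 3$; hence $\ft^{i}(M)^{\leq 3}$ is open and (ii) holds. Assume (ii). Then $Z = \mathrm{Spec}(R_{0})^{\leq 3} \setminus \ft^{i}(M) = \mathrm{Spec}(R_{0})^{\leq 3} \setminus \ft^{i}(M)^{\leq 3}$ is closed in $\mathrm{Spec}(R_{0})^{\leq 3}$, say $Z = V(\fa) \cap \mathrm{Spec}(R_{0})^{\leq 3}$, and again $Z \subseteq C^{i}(M)$ by (b) forces all primes of $Z$ to have height $3$; every $\fq_{0} \in Z$ contains a minimal prime $\fp$ of $\fa$ with $\mathrm{height}(\fp) \leq \mathrm{height}(\fq_{0}) = 3$, so $\fp \in Z$, whence $\mathrm{height}(\fp) = 3$ and $\fp = \fq_{0}$; thus $Z \subseteq \mathrm{Min}(\fa)$ is finite, and the uniform-tameness clause of (iii) is inherited from (ii).

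The implication (iii)$\Rightarrow$(i) is the core of the proposition and the step I expect to be the main obstacle. Split $C^{i}(M) = (C^{i}(M) \setminus \ft^{i}(M)) \cup (C^{i}(M) \cap \ft^{i}(M))$. The first set is contained in $\mathrm{Spec}(R_{0})^{\leq 3} \setminus \ft^{i}(M)$ (every prime of $C^{i}(M)$ having height $3$) and hence is finite by (iii). For the second set, use the uniform tameness along $\ft^{i}(M)^{\leq 3}$ to fix a common bound $n_{0}$, which we may take $\leq n^{i}(M)$, and split according to the tameness dichotomy. A prime $\fp_{0} \in C^{i}(M) \cap \ft^{i}(M)$ with $(H^{i}_{R_{+}}(M)_{n})_{\fp_{0}} \neq 0$ for all $n \leq n_{0}$ lies in $\mathrm{Supp}_{R_{0}}(H^{i}_{R_{+}}(M)_{n_{0}})$, hence in $C^{i}_{n_{0}}(M)$, which is finite by Lemma~\ref{2.6}. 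A prime $\fp_{0} \in C^{i}(M) \cap \ft^{i}(M)$ with $(H^{i}_{R_{+}}(M)_{n})_{\fp_{0}} = 0$ for all $n \leq n_{0}$ must, since its membership in $C^{i}(M)$ forces $(H^{i}_{R_{+}}(M)_{n})_{\fp_{0}} \neq 0$ for some $n \leq n^{i}(M)$, have that non-vanishing degree in the finite range $n_{0} < n \leq n^{i}(M)$, so it belongs to the finite union $\bigcup_{n_{0} < n \leq n^{i}(M)} C^{i}_{n}(M)$ of finite sets. Hence $C^{i}(M) \cap \ft^{i}(M)$, and with it $C^{i}(M)$, is finite. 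The delicate point is that one cannot decide in advance which branch of the dichotomy a given prime of $C^{i}(M) \cap \ft^{i}(M)$ obeys, so both must be bounded; this succeeds precisely because Lemma~\ref{2.6} exhibits each $C^{i}_{n}(M)$ as a subset of an associated-primes set (hence finite), and because only finitely many degrees separate $n_{0}$ from $n^{i}(M)$.
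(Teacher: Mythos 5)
Your proof is correct and follows essentially the same route as the paper: part (a) reduces to Remark~\ref{2.4}, part (b) is immediate, and in (c) your decomposition of $C^{i}(M)\cap\ft^{i}(M)$ according to the two branches of the uniform-tameness dichotomy (placing one branch inside $\mathrm{Ass}_{R_{0}}(H^{i}_{R_{+}}(M)_{n_{0}})$ and the other inside a finite union over degrees $n_{0}<n\leq n^{i}(M)$) is exactly the paper's $F_{I}\cup F_{II}$ argument. The only difference is that you spell out some steps more explicitly (the case split $i\leq f(M)$ versus $i>f(M)$ in (a), and why a finite set of height-$3$ primes is closed in $\mathrm{Spec}(R_{0})^{\leq 3}$), which the paper leaves implicit.
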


\begin{proof}
(a): This follows from Remark~\ref{2.4} (B) and the fact that
$$\big[\bigcup_{n\leq n^{i}(M)}\mathrm{Supp}_{R_{0}}(H^{i}_{R_{+}}(M)_{n})\big]^{=3}\setminus \overline{\fP^{i}(M)}= C^{i}(M).$$
\\(b): This is immediate by statement (a).
\\(c): "(i) $\Rightarrow$ (ii)": This follows easily by statements (a) and (b)
and the fact that $M$ is uniformly tame at level $i$ along each finite subset $V\subseteq \fP^{i}(M)$.
\\"(ii) $\Rightarrow$ (iii)``: Assume that statement (ii) holds. As $\mathrm{Spec}(R_{0})^{\leq 2}\subseteq \ft^{i}(M)^{\leq 3}$
(s. Remark~\ref{2.4} (B) (i)) and as $\ft^{i}(M)^{\leq 3}$ is open in $\mathrm{Spec}(R_{0})^{\leq 3}$
it follows that $\mathrm{Spec}(R_{0})^{\leq 3}\setminus \ft^{i}(M)^{\leq 3}$ is a finite set, and this proves statement (iii).
\\"(iii) $\Rightarrow$ (i)'': Assume that statement (iii) holds so that $\mathrm{Spec}(R_0)^{\leq 3}\setminus \ft^i(M)$ is finite
and $M$ is uniformly tame along $\ft^i(M)^{\leq 3}$. By statement (b) we have
$\mathrm{Spec}(R_{0})^{\leq 3}\setminus \ft^{i}(M)^{\leq 3} \subseteq C^{i}(M)\subseteq\mathrm{Spec}(R_{0})^{= 3}$.
It thus suffices to show that the set $F:= C^{i}(M)\cap \ft^{i}(M)$ is finite.

By uniform tameness there is some integer $n_{0}\leq n^{i}(M)$ such that for each $\fp_{0}\in F$ either
\[(I)\,\ \fp_{0}\in \mathrm{Supp}_{R_{0}}(H^{i}_{R_{+}}(M)_{n})\,\ \textrm{for all}\,\ n\leq n_{0}; \mathrm{or}\]
\[(II)\,\ \fp_{0}\notin \mathrm{Supp}_{R_{0}}(H^{i}_{R_{+}}(M)_{n})\,\ \textrm{for all}\,\ n\leq n_{0}.\]

Let $F_{I}:= \{\fp_{0}\in F \mid \fp_{0}\, \text{satisfies}\, (I)\}$ and
$F_{II}:= \{\fp_{0}\in F \mid \fp_{0} \, \text{satisfies}\, (II)\}$.
As $F= F_{I}\cup F_{II}$ it suffices to show that $F_{I}$ and $F_{II}$ are finite.

If $\fp_{0}\in F_{I}$,
we have $\fp_{0}\in \big(\mathrm{Supp}_{R_{0}}(H^{i}_{R_{+}}(M)_{n_{0}})\setminus \overline{\fP^{i}(M)}\big)^{\leq 3}$.
As $n_{0}\leq n^{i}(M)$ statement (a) implies $\fp_{0}\in \mathrm{Ass}_{R_{0}}(H^{i}_{R_{+}}(M)_{n_{0}})$.
This proves that $F_{I}\subseteq  \mathrm{Ass}_{R_{0}}(H^{i}_{R_{+}}(M)_{n_{0}})$ and  thus $F_{I}$ is finite.

Clearly
$F_{II}\subseteq  \big(\bigcup_{n_{0}\leq n\leq n^{i}(M)} \mathrm{Supp}_{R_{0}}(H^{i}_{R_{+}}(M)_{n}\setminus
\overline{\fP^{i}(M)}\big)^{\leq 3}$.
So, by statement (a) we see that $F_{II}$ is contained in the finite set
$\bigcup_{n_{0}\leq n\leq n^{i}(M)}\mathrm{Ass}_{R_{0}}(H^{i}_{R_{+}}(M)_{n})$.
\end{proof}

 \section{Finiteness of Critical sets}

We keep all notations and hypotheses of the previous section. So $R = \bigoplus _{n \in \mathbb{N}_0} R_n$ is
a Noetherian homogeneous ring whose base ring $R_0$ is essentially of finite type over some field and $M$ is a
finitely generated graded $R$-module. By statement (c) of Proposition 2.8 it seems quite appealing to look
for criteria which ensure that the critical sets $C^{i}(M)$ are
finite. This is precisely the aim of the present section.

\begin{remi}\label{3.1}

\rm{ (A) Assume that $R_{0}$ is a domain. Then, according to [BFL, Theorem
2.5] there is an element $s\in R_{0}\backslash \{0\}$ such that the
$(R_{0})_{s}$-module $(H^{i}_{R_{+}}(M))_{s}$ is torsion-free
or 0 for all $i\in \mathbb{N}_{0}$. From this we conclude that
(with the standard convention that $\bigcap_{\fp_{0}\in \emptyset}\fp_{0}:=R_{0}$):}

\begin{quote} \textit{If $R_{0}$ is a domain, the ideal \[ \fa^{i}(M):=
\bigcap_{\fp_{0}\in \mathrm{Ass}_{R_{0}}(H^{i}_{R_{+}}(M))\setminus
\{0\}}\fp_{0} \]
is $\neq 0$ for all $i\in \mathbb{N}_{0}$.}
\end{quote}

(B) Keep the notations and hypotheses of part (A). Then:
\begin{quote} \textit{If $x \in \fa^i(M)$ and if $N$ is a second finitely generated graded $R$-module such that the graded $R_x$-modules
$M_x$ and $N_x$ are isomorphic, then $x \in \fa^i(N)$.}
\end{quote}
This follows immediately from the fact, that for all $n \in \mathbb{Z}$ there is an isomorphism of $(R_0)_x$-modules
$(H^i_{R_{+}}(M)_n)_x \cong (H^i_{R_{+}}(N)_n)_x$.
For our purposes the most significant application of this observation is:
\begin{quote} \textit{If $x \in \fa^i(M)$ then $x \in \fa^i(M/{\Gamma}_{(x)}(M))$.}
\end{quote}
\end{remi}

\begin{nota}\label{3.2}

\rm{An element $x\in R_{0}$ is called a \textit{quasi-non-zero divisor with respect
to (the finitely generated graded $R$-module) $M$} if $x$ is a non-zero divisor
on $M_{n}$ for all $n\gg 0$. We denote the set of these
quasi-non-zero divisors by $\mathrm{NZD}^{*}_{R_{0}}(M)$.
Thus in the notation of Reminder and Remark~\ref{2.2} (A) we may write}
$$\mathrm{NZD}^{*}_{R_{0}}(M)= R_{0}\backslash \bigcup _{\fp_{0}\in \mathrm{Ass}_{R_{0}}^{*}(M)}\fp_{0}$$
\end{nota}

\begin{lem}\label{3.3}
Let $i, k\in \mathbb{N}_{0}$ and assume that $\rm{height}$$(\fp_{0})\geq k$ for all $\fp_{0}\in\rm{Ass}$ $_{R_{0}}^{*}(M).$
Then, the set $ \rm{Ass}$$_{R_{0}}(H^{i}_{R_{+}}(M)_{n})^{\leq k+2}$ is asymptotically stable for $n\rightarrow -\infty $.
In particular, if $k> 0$, then $C^{i}(M)$ is finite.
\end{lem}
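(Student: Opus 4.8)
The plan is to exploit the hypothesis that all primes in $\mathrm{Ass}^{*}_{R_0}(M)$ have height $\geq k$ in order to bound the heights of the relevant associated primes of $H^i_{R_+}(M)_n$ from below, and then combine this with the codimension-$\leq 2$ asymptotic stability from Reminder and Remark~\ref{2.2}~(D). First I would recall that the key input available to us is the fact (\cite{B0}, Reminder and Remark~\ref{2.2}~(A)) that $\mathrm{Ass}_{R_0}(H^i_{R_+}(M)_n) \subseteq \{\fp \cap R_0 \mid \fp \in \mathrm{Ass}_R(M)\}$ for $n \ll 0$ is not quite what we want; rather the relevant tool is a comparison of associated primes of local cohomology with associated primes of the module itself. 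The heart of the matter is that, for $n \ll 0$, every $\fp_0 \in \mathrm{Ass}_{R_0}(H^i_{R_+}(M)_n)$ contains some $\fq_0 \in \mathrm{Ass}^{*}_{R_0}(M)$ (this kind of statement is exactly what \cite{B1} provides, and is the mechanism already used implicitly in Reminder and Remark~\ref{2.2}). Granting this, $\mathrm{height}(\fp_0) \geq \mathrm{height}(\fq_0) \geq k$.

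Given that lower bound, I would argue as follows. Fix $i$. For $n \ll 0$ we have $\mathrm{Ass}_{R_0}(H^i_{R_+}(M)_n)^{\leq k+2} = \mathrm{Ass}_{R_0}(H^i_{R_+}(M)_n)^{= k} \cup \mathrm{Ass}_{R_0}(H^i_{R_+}(M)_n)^{\leq 2}$ after intersecting appropriately — more precisely, since every associated prime has height $\geq k$, a prime of height $\leq k+2$ in that set has height in $\{k, k+1, k+2\}$ if $k \geq 2$, or in $\{k, k+1, \dots, k+2\}$ bounded below by $k$ in general, and crucially the part of height $\leq 2$ is governed by Reminder and Remark~\ref{2.2}~(D), which already gives asymptotic stability of $\mathrm{Ass}_{R_0}(H^i_{R_+}(M)_n)^{\leq 2}$. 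So I must handle the associated primes of heights $k, k+1, k+2$ that exceed $2$. Here I would invoke once more the fact that for $n \ll 0$ each such $\fp_0$ contains some $\fq_0 \in \mathrm{Ass}^{*}_{R_0}(M)$ of height $\geq k$; since $\mathrm{height}(\fp_0) \leq k+2$ this forces $\fp_0$ to lie within height $2$ above a member of the finite set $\mathrm{Ass}^{*}_{R_0}(M)$. Localizing at such a $\fq_0$ converts the problem into one over a base ring of dimension $\leq 2$ (namely $(R_0)_{\fq_0}/\fq_0(R_0)_{\fq_0}$ has the right dimension count), whence the codimension-$\leq 2$ stability result of \cite{B1} applies, after Nagata-style localization, to each of the finitely many such $\fq_0$. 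Taking the (finite) union over $\fq_0 \in \mathrm{Ass}^{*}_{R_0}(M)$ of the finitely many primes produced this way, together with the finite stable set $\fP^i(M)$ from Reminder and Remark~\ref{2.2}~(D), yields a single finite set equal to $\mathrm{Ass}_{R_0}(H^i_{R_+}(M)_n)^{\leq k+2}$ for all $n \ll 0$.

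For the final assertion, suppose $k > 0$. Then $\overline{\fP^i(M)}^{\leq 2} = \mathrm{Supp}_{R_0}(H^i_{R_+}(M)_n)^{\leq 2}$ for $n \leq n^i(M)$ by Reminder and Remark~\ref{2.2}~(D)(iii), so in the description of $C^i_n(M)$ from Lemma~\ref{2.6} the associated primes we remove are exactly those of height $\leq 2$. Hence $C^i_n(M) \subseteq \mathrm{Ass}_{R_0}(H^i_{R_+}(M)_n)^{= 3} \subseteq \mathrm{Ass}_{R_0}(H^i_{R_+}(M)_n)^{\leq k+2}$ provided $k \geq 1$ makes $3 \leq k+2$, i.e.\ $k \geq 1$, which holds. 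By the first part this last set is independent of $n$ for $n \ll 0$, say equal to a finite set $G$; and for the finitely many remaining values $n$ with $n_0 \leq n \leq n^i(M)$ (where $n_0$ is chosen past the stable range) each $C^i_n(M) \subseteq \mathrm{Ass}_{R_0}(H^i_{R_+}(M)_n)$ is finite. Therefore $C^i(M) = \bigcup_{n \leq n^i(M)} C^i_n(M)$ is a finite union of finite sets, hence finite.

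I expect the main obstacle to be making precise and justifying the claim that, for $n \ll 0$, every $\fp_0 \in \mathrm{Ass}_{R_0}(H^i_{R_+}(M)_n)$ contains a member of $\mathrm{Ass}^{*}_{R_0}(M)$, and then rigorously reducing the height-$\geq k$ situation to the known codimension-$\leq 2$ case by localization — in particular verifying that after localizing at $\fq_0 \in \mathrm{Ass}^{*}_{R_0}(M)$ the hypotheses of \cite[Theorem 4.1]{B1} still apply to $M_{\fq_0}$ over $(R_0)_{\fq_0}$ with the correct codimension shift. The bookkeeping with the finitely many exceptional degrees $n$ between the stable range and $n^i(M)$ is routine, as is the passage from associated primes back to the critical set via Lemma~\ref{2.6}.
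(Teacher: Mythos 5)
Your high-level plan---exploit the height $\geq k$ hypothesis to reduce the codimension-$\leq k+2$ statement to the known codimension-$\leq 2$ statement of Reminder and Remark~\ref{2.2}~(D)---is the right one, and your handling of the final assertion (finiteness of $C^i(M)$ when $k > 0$) via $C^i_n(M) \subseteq \mathrm{Ass}_{R_0}(H^i_{R_+}(M)_n)^{=3} \subseteq \mathrm{Ass}_{R_0}(H^i_{R_+}(M)_n)^{\leq k+2}$ and the bookkeeping over finitely many intermediate degrees is fine. However, the pivotal reduction step is wrong as written.

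You propose: ``Localizing at such a $\fq_0$ converts the problem into one over a base ring of dimension $\leq 2$ (namely $(R_0)_{\fq_0}/\fq_0(R_0)_{\fq_0}$ has the right dimension count).'' This has two problems. First, $(R_0)_{\fq_0}/\fq_0(R_0)_{\fq_0}$ is the residue field at $\fq_0$, of dimension $0$, not a ring of dimension $\leq 2$; and more fundamentally, localizing $R_0$ at $\fq_0$ detects primes \emph{contained in} $\fq_0$, whereas you need to control primes \emph{containing} $\fq_0$. The operation you want is passage to a quotient, not a localization. Second, even after fixing that to ``pass to $R_0/\fq_0$,'' you still cannot invoke the codimension-$\leq 2$ stability result over $R_0/\fq_0$, because $H^i_{R_+}(M)_n$ is in general not an $R_0/\fq_0$-module: $\fq_0 \in \mathrm{Ass}^*_{R_0}(M)$ contains the annihilator of $M_n$ for $n\gg 0$, but it need not annihilate $H^i_{R_+}(M)_n$, so there is no natural comparison of $\mathrm{Ass}_{R_0}(H^i_{R_+}(M)_n)$ with associated primes over $R_0/\fq_0$. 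The paper circumvents both difficulties at once: replace $M$ by $M_{\geq n_0}$ (which changes $H^i_{R_+}(M)$ in only finitely many degrees), so that $\fa_0 := (0:_{R_0}M_{\geq n_0})$ has height $\geq k$ by the hypothesis on $\mathrm{Ass}^*_{R_0}(M)$; then $\fa_0$ kills $M$ and hence $H^i_{R_+}(M)$, yielding a natural identification $H^i_{R_+}(M)_n \cong H^i_{(R/\fa_0 R)_+}(M)_n$ as $R_0/\fa_0$-modules, under which $\mathrm{height}(\fp_0/\fa_0) \leq \mathrm{height}(\fp_0) - k$ for $\fp_0 \supseteq \fa_0$. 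This translates the height-$\leq k+2$ condition over $R_0$ into the height-$\leq 2$ condition over $R_0/\fa_0$, and then a \emph{single} application of Reminder and Remark~\ref{2.2}~(D) over the homogeneous ring $R/\fa_0 R$ finishes the argument, with no prime-by-prime union needed. Incidentally, the height bound on associated primes of $H^i_{R_+}(M)_n$ that you derive from \cite{B1} is also more easily obtained from the annihilator: once $M$ is replaced by $M_{\geq n_0}$, any such associated prime contains $\fa_0$ and so has height $\geq k$.
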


\begin{proof}
There is some integer $n_{0}\in \mathbb{Z}$ such that $(0:_{R_{0}}M_{\geq n_{0}})\subseteq R_{0}$ is of height $\geq k$,
where we use the notation $M_{\geq n_{0}}:= \bigoplus _{n \geq n_{0}}M_{n}$.
As $H^{i}_{R_{+}}(M)$ and $H^{i}_{R_{+}}(M_{\geq n_{0}})$ differ only in finitely many degrees we may replace
$M$ by $M_{\geq n_{0}}$ and hence assume that $\fa_{0}M= 0$ for some ideal $\fa_{0}\subseteq R_{0}$ with height$(\fa_{0})\geq k$.
As height$(\fp_{0}/\fa_{0})\leq $ height$(\fp_{0})- k$ for all $\fp_{0}\in \rm{Var}(\fa_{0})$
and in view of the natural isomorphisms of $R_{0}$-modules $H^{i}_{R_{+}}(M)_{n}
\cong H^{i}_{(R/\fa_{0}R)_{+}}(M)_{n}$ we now get
a canonical bijection
\[\mathrm{Ass}_{R_{0}}(H^{i}_{R_{+}}(M)_{n})^{\leq k+ 2}\leftrightarrow
\mathrm{Ass}_{R_{0}/\fa_{0}}(H^{i}_{R_{+}}(M)_{n})^{\leq 2},\]
for all $n\in \mathbb{Z}$. So, by Reminder and Remark~\ref{2.2} (D) the left hand side set is asymptotically stable for
$n\rightarrow -\infty $.
If $k> 0$ the finiteness of $C^{i}(M)$ now follows easily from statement (a) of Lemma 2.6.
\end{proof}

Let $i \in \mathbb{N}_0$. According to Remark~\ref{2.4} (B) we know that $M$ is uniformly tame at level $i$ in codimension $\leq 2$.
we also know that $M$ need not be tame at level $i$ in codimension $3$. It is natural to ask, whether there are only finitely
many primes $\fp_{0}$ of height $3$ in $R_0$ such that $M$ is not tame at level $i$ in $\fp_{0}$ and whether outside of
these ``bad'' primes the module $M$ is uniformly tame at level $i$ in codimension $\leq 3$. We aim to give a few
sufficient criteria for this behaviour. The following proposition plays a crucial r\^ole in this respect.

\begin{thm}\label{3.4}
Let $i\in \mathbb{N}_{0}$. Assume that $R_{0}$ is a domain and that
$\mathrm{NZD}^{*}_{R_{0}}(M)\cap \fa^{i}(M)\neq \emptyset$. Then $C^{i}(M)$
is a finite set. In particular the set $\mathrm{Spec}(R_{0})^{\leq 3} \setminus \ft^i(M)$ consists of finitely many primes
of height $3$ and $M$ is uniformly tame at level $i$ along $\ft^{i}(M)^{\leq 3}$.
\end{thm}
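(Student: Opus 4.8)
The plan is to reduce everything to the single statement that the critical set $C^i(M)$ is finite: granted this, the remaining assertions follow from Proposition~\ref{2.8}(b) and the implication (i)$\Rightarrow$(iii) of Proposition~\ref{2.8}(c). If $i=0$ then $H^0_{R_+}(M)=\Gamma_{R_+}(M)$ is finitely generated, hence $C^0(M)=\emptyset$; so assume $i\geq 1$ and fix $x\in\mathrm{NZD}^*_{R_0}(M)\cap\fa^i(M)$, noting $x\neq 0$ since $R_0$ is a domain. As a preliminary reduction I replace $M$ by $N:=M/\Gamma_{(x)}(M)$: because $x\in\mathrm{NZD}^*_{R_0}(M)$ one has $\Gamma_{(x)}(M)_n=0$ for $n\gg 0$, so the finitely generated graded module $\Gamma_{(x)}(M)$ lives in finitely many degrees and hence is $R_+$-torsion; therefore $H^j_{R_+}(\Gamma_{(x)}(M))=0$ for $j\geq 1$, and the exact sequence $0\to\Gamma_{(x)}(M)\to M\to N\to 0$ gives $H^j_{R_+}(M)\cong H^j_{R_+}(N)$ for all $j\geq 1$, so $C^i(M)=C^i(N)$. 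Moreover $x$ is a non-zerodivisor on $N$, one has $M_x\cong N_x$ so that $x\in\fa^i(N)$ by Reminder~\ref{3.1}(B), and $N_n=M_n$ for $n\gg 0$ so that $x\in\mathrm{NZD}^*_{R_0}(N)$. After renaming $N$ as $M$ I may thus assume in addition that $x$ is a non-zerodivisor on $M$.

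The heart of the matter is the following. From the short exact sequence $0\to M\xrightarrow{x}M\to M/xM\to 0$ and its long exact cohomology sequence I obtain, in every degree $n$, a monomorphism of $R_0$-modules
$$H^i_{R_+}(M)_n\big/xH^i_{R_+}(M)_n\;\hookrightarrow\;H^i_{R_+}(M/xM)_n .$$
I claim that $C^i_n(M)\subseteq\mathrm{Ass}_{R_0}(H^i_{R_+}(M/xM)_n)^{=3}$ for all $n\leq n^i(M)$. So let $\fp_0\in C^i_n(M)$; then $\mathrm{height}(\fp_0)=3$, $\fp_0\in\mathrm{Ass}_{R_0}(H^i_{R_+}(M)_n)$ and $\fp_0\notin\overline{\fP^i(M)}$. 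Since $\mathrm{height}(\fp_0)=3$ we have $\fp_0\neq 0$, so $x\in\fp_0$ because $x\in\fa^i(M)$. As $\fP^i(M)=\mathrm{Ass}_{R_0}(H^i_{R_+}(M)_n)^{\leq 2}$ and $\fp_0\notin\overline{\fP^i(M)}$, no associated prime of $H^i_{R_+}(M)_n$ contained in $\fp_0$ has height $\leq 2$; together with $\mathrm{height}(\fp_0)=3$ this forces $\fp_0(R_0)_{\fp_0}$ to be the only associated prime of $(H^i_{R_+}(M)_n)_{\fp_0}$, so this nonzero finitely generated $(R_0)_{\fp_0}$-module is of finite length. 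By Nakayama its quotient modulo $x$ is nonzero of finite length, and by the displayed monomorphism localised at $\fp_0$ it embeds into $(H^i_{R_+}(M/xM)_n)_{\fp_0}$; a nonzero submodule of finite length forces $\fp_0(R_0)_{\fp_0}\in\mathrm{Ass}_{(R_0)_{\fp_0}}\big((H^i_{R_+}(M/xM)_n)_{\fp_0}\big)$, that is $\fp_0\in\mathrm{Ass}_{R_0}(H^i_{R_+}(M/xM)_n)$. This proves the claim.

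It remains to see that $\bigcup_{n\leq n^i(M)}\mathrm{Ass}_{R_0}(H^i_{R_+}(M/xM)_n)^{=3}$ is finite. Here I pass to the homogeneous Noetherian ring $\overline R:=R/xR$, whose base ring $\overline R_0:=R_0/xR_0$ is again essentially of finite type over a field; as $M/xM$ is annihilated by $x$ it is a graded $\overline R$-module and $H^i_{R_+}(M/xM)\cong H^i_{\overline R_+}(M/xM)$. By Reminder and Remark~\ref{2.2}(D) applied to $\overline R$ and $M/xM$, the set $\mathrm{Ass}_{\overline R_0}(H^i_{\overline R_+}(M/xM)_n)^{\leq 2}$ is asymptotically stable for $n\rightarrow-\infty$. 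Every prime of $\mathrm{Ass}_{R_0}(H^i_{R_+}(M/xM)_n)$ contains $x$, and for a prime $\fp_0\supseteq xR_0$ of $R_0$-height $3$ one has $\mathrm{height}_{\overline R_0}(\fp_0/xR_0)\leq 2$ (prepend $0\subsetneq\fq$, with $\fq$ minimal over $x$, to a maximal chain of primes of $R_0/xR_0$ below $\fp_0/xR_0$). Hence, under the closed immersion $\mathrm{Spec}(\overline R_0)\hookrightarrow\mathrm{Spec}(R_0)$, the sets $\mathrm{Ass}_{R_0}(H^i_{R_+}(M/xM)_n)^{=3}$ are for all sufficiently small $n$ contained in one fixed finite set, and finite for the remaining finitely many degrees $n\leq n^i(M)$. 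Combined with the previous paragraph this shows that $C^i(M)$ is finite, and the supplementary statements follow as indicated.

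The step that needs the most care is the second paragraph: one must make sure that the hypotheses $\fp_0\notin\overline{\fP^i(M)}$ and $\mathrm{height}(\fp_0)=3$ genuinely make $(H^i_{R_+}(M)_n)_{\fp_0}$ a module of finite length, and that the non-vanishing of its reduction modulo $x$ is carried into $H^i_{R_+}(M/xM)_n$ — the latter being exactly why one first reduces to the case where $x$ is a non-zerodivisor on $M$, so that the required piece $H^i_{R_+}(M)_n\xrightarrow{x}H^i_{R_+}(M)_n\to H^i_{R_+}(M/xM)_n$ of the long exact sequence is at one's disposal.
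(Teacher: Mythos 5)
Your proof is correct and follows essentially the same route as the paper: reduce to the case where $x$ is an honest non-zerodivisor on $M$, then use the long exact cohomology sequence of $0\to M\xrightarrow{x}M\to M/xM\to 0$ together with Nakayama's lemma to embed $C^i(M)$ into $\bigcup_n\mathrm{Ass}_{R_0}(H^i_{R_+}(M/xM)_n)^{=3}$, and finally kill the latter by viewing $M/xM$ over $R/xR$, whose base ring has one fewer codimension — which is exactly the content of Lemma~\ref{3.3}, which the paper invokes and you instead reprove in place. The one cosmetic difference is the preliminary reduction: the paper truncates $M$ to $M_{\geq m(M)}$ (which only identifies $H^i$ for $i>1$, so it first disposes of all $i\leq f(M)$ separately), whereas you pass to $M/\Gamma_{(x)}(M)$, which works uniformly for all $i\geq 1$ and is arguably a little cleaner. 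A tiny slip: your claim that $C^0(M)=\emptyset$ is not quite right in general — $H^0_{R_+}(M)_n$ could be nonzero with all associated primes of height $\geq 3$ for finitely many $n\leq n^0(M)$ — but $C^0(M)$ is certainly a finite union of finite sets, which is all that is needed.
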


\begin{proof}
If $i\leq f(M)$ our claim is clear by Remark~\ref{2.4} (A) and Proposition~\ref{2.8} (c). So, let $i> f(M)$. Then in particular $i> 1$.

Now, let $m(M)\in \mathbb{Z}$ be as in Reminder and Remark~\ref{2.2} (A) and set $N:= M_{\geq m(M)}:= \bigoplus _{n \geq m(M)} M_{n}$.
Then NZD$^{*}_{R_{0}}(M)$ equals the set NZD$_{R_{0}}(N)$ of non-zero divisors in $R_{0}$ on $N$.
As $i> 1$ we have $H^{i}_{R_{+}}(N)= H^{i}_{R_{+}}(M)$ and hence  $\fa^{i}(M)= \fa^{i}(N)$
and $C^{i}(M)= C^{i}(N)$. So, we may replace $M$ by $N$ and hence assume that NZD$_{R_{0}}(M)\cap
\fa^{i}(M)\neq \emptyset$.

Let $x\in  \mathrm{NZD}_{R_{0}}(M)\cap \fa^{i}(M)$. Then, the short exact sequence
$0\longrightarrow M \stackrel{x}{\longrightarrow }M \longrightarrow M/xM \longrightarrow 0$ implies exact
sequences
\[H^{i}_{R_{+}}(M)_{n}\stackrel{x}{
\longrightarrow}H^{i}_{R_{+}}(M)_{n}\longrightarrow
H^{i}_{R_{+}}(M/xM)_{n}\]
for all $n\in \mathbb{Z}$. Now, let $\fp_{0}\in C^{i}(M)$ so that $\mathrm{height}(\fp_0 )= 3$ (s. Lemma~\ref{2.6}).
Then, there is an integer $n\leq n^{i}(M)$
such that $\fp_{0}$ is a minimal associated prime of $H^{i}_{R_{+}}(M)_{n}$.
We thus get an exact sequence of $(R_{0})_{\fp_{0}}$-modules
\[(H^{i}_{R_{+}}(M)_{n})_{\fp_{0}}\stackrel{\frac{x}{1}}{\longrightarrow}
(H^{i}_{R_{+}}(M)_{n})_{\fp_{0}}\stackrel{\varrho}{\longrightarrow}(H^{i}_{R_{+}}(M/xM)_{n})_{\fp_{0}}\]
in which the middle module is of finite length $\neq 0$.
As $x\in \fa^{i}(M)\subseteq \fp_{0}$
it follows by Nakayama that $\varrho$ is not the zero map. Therefore $(H^{i}_{R_{+}}(M/xM)_{n})_{\fp_{0}}$
contains a non-zero $(R_{0})_{\fp_{0}}$-module of finite length. It follows that
$\fp_{0}\in \mathrm{Ass}_{R_{0}}(H^{i}_{R_{+}}(M/xM)_{n}) ^{= 3}$.
This shows that $C^{i}(M)\subseteq \text{Ass}_{R_{0}}(H^{i}_{R_{+}}(M/xM)_{n}) ^{= 3}$.
So, by Lemma~\ref{3.3} the set
$C^{i}(M)$ is finite.
\end{proof}

\begin{cor}\label{3.5}

Let $i\in \mathbb{N}_{0}$. Assume that $R_{0}$ is a domain and that
$M_{n}$ is a torsion-free $R_{0}$-module
for all $n\gg 0$. Then the set $C^{i}(M)$ is finite. In particular, $M$ is uniformly tame at level $i$ along
$\ft^i(M)^{\leq 3}$ and the set $\rm{Spec}(R_0)^{\leq 3} \setminus \ft^i(M)$ is finite.
\end{cor}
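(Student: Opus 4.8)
The plan is to reduce everything to Theorem~\ref{3.4}: once we know that $\mathrm{NZD}^{*}_{R_{0}}(M) \cap \fa^{i}(M) \neq \emptyset$, all three assertions of the corollary are literally the conclusion of that theorem. So the only thing to do is to produce an element in this intersection.

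First, since $R_{0}$ is a domain, Reminder~\ref{3.1}~(A) tells us that the ideal $\fa^{i}(M)$ is non-zero; so I would fix any $x \in \fa^{i}(M) \setminus \{0\}$. I would then check that $x \in \mathrm{NZD}^{*}_{R_{0}}(M)$. By Notation~\ref{3.2}, $x$ is a quasi-non-zero divisor with respect to $M$ precisely when multiplication by $x$ is injective on $M_{n}$ for all $n \gg 0$; and this holds because $M_{n}$ is torsion-free over the domain $R_{0}$ for $n \gg 0$ while $x \neq 0$. Equivalently, phrased via associated primes: for $n > m(M)$ we have $\mathrm{Ass}_{R_{0}}(M_{n}) = \mathrm{Ass}^{*}_{R_{0}}(M)$ (Reminder and Remark~\ref{2.2}~(A)), and a torsion-free module over a domain has no non-zero associated prime, so $\bigcup_{\fp_{0} \in \mathrm{Ass}^{*}_{R_{0}}(M)} \fp_{0} \subseteq \{0\}$ and hence $R_{0} \setminus \{0\} \subseteq \mathrm{NZD}^{*}_{R_{0}}(M)$ by the description of $\mathrm{NZD}^{*}_{R_{0}}(M)$ recalled in Notation~\ref{3.2}.

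Consequently $x \in \mathrm{NZD}^{*}_{R_{0}}(M) \cap \fa^{i}(M)$, so this intersection is non-empty and Theorem~\ref{3.4} applies, yielding at once that $C^{i}(M)$ is finite, that $\mathrm{Spec}(R_{0})^{\leq 3} \setminus \ft^{i}(M)$ consists of finitely many primes of height $3$, and that $M$ is uniformly tame at level $i$ along $\ft^{i}(M)^{\leq 3}$. There is essentially no obstacle in this argument; the only slightly delicate ingredient is the identification of $\mathrm{NZD}^{*}_{R_{0}}(M)$ with $R_{0} \setminus \{0\}$ under the torsion-freeness hypothesis, which rests on the standard fact that over a domain the only associated prime of a (non-zero) torsion-free module is the zero ideal.
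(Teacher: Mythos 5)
Your proof is correct and is essentially the same argument as the paper's: both identify $\mathrm{NZD}^{*}_{R_{0}}(M)$ with $R_{0}\setminus\{0\}$ from the torsion-freeness hypothesis, invoke Reminder~\ref{3.1}~(A) for $\fa^{i}(M)\neq 0$, and then conclude by Theorem~\ref{3.4}. You merely spell out the associated-primes justification for the identification of $\mathrm{NZD}^{*}_{R_{0}}(M)$, which the paper states without comment.
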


\begin{proof}
By our hypotheses we have $\mathrm{NZD}^{*}_{R_{0}}(M)= R_{0}\setminus \{0\}$.
By Reminder~\ref{3.1} (A) we have $\fa^{i}(M)\neq 0$.
Now we conclude by Theorem~\ref{3.4}.
\end{proof}

\begin{cor}\label{3.6}

Let $i \in \mathbb{N}_0$ and assume that $R_0$ is a domain and $M$ is torsion-free over $R_0$.
Then $M$ is uniformly tame at level $i$ along a set which is obtained by removing finitely many primes
of height $3$ from $\mathrm{Spec}(R_0)^{\leq 3}$.
\end{cor}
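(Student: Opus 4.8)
The plan is to reduce Corollary~\ref{3.6} to Corollary~\ref{3.5}. The statement of Corollary~\ref{3.6} differs from that of Corollary~\ref{3.5} only in that the hypothesis ``$M_n$ is torsion-free over $R_0$ for all $n \gg 0$'' is replaced by the (formally stronger, but in substance essentially equivalent in large degrees) hypothesis ``$M$ is torsion-free over $R_0$''. So the first observation is simply that if $M$ is torsion-free over $R_0$, then a fortiori each graded component $M_n$ is a torsion-free $R_0$-module for every $n$, hence certainly for all $n \gg 0$. Thus the hypotheses of Corollary~\ref{3.5} are satisfied.

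Next I would invoke Corollary~\ref{3.5} to conclude that the critical set $C^i(M)$ is finite, that $M$ is uniformly tame at level $i$ along $\ft^i(M)^{\leq 3}$, and that $\mathrm{Spec}(R_0)^{\leq 3} \setminus \ft^i(M)$ consists of finitely many primes --- all of which, by Proposition~\ref{2.8}(b) together with Lemma~\ref{2.6}, necessarily have height $3$, since $\mathrm{Spec}(R_0)^{\leq 3} \setminus \ft^i(M)^{\leq 3} \subseteq C^i(M) \subseteq \mathrm{Spec}(R_0)^{=3}$. Setting $E := \mathrm{Spec}(R_0)^{\leq 3} \setminus \ft^i(M)$, this is a finite set of primes of height $3$, and $\mathrm{Spec}(R_0)^{\leq 3} \setminus E = \ft^i(M)^{\leq 3} \subseteq \ft^i(M)$, along which $M$ is uniformly tame at level $i$ by the conclusion of Corollary~\ref{3.5}. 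This is exactly the assertion: $M$ is uniformly tame at level $i$ along a set obtained by removing finitely many height-$3$ primes from $\mathrm{Spec}(R_0)^{\leq 3}$.

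Honestly, there is no real obstacle here: Corollary~\ref{3.6} is a direct specialization of Corollary~\ref{3.5}, repackaged in the language of ``removing finitely many primes of height $3$.'' The only point requiring the tiniest care is to confirm that the finitely many bad primes all have height exactly $3$ (rather than possibly height $\leq 2$), which follows from Remark~\ref{2.4}(B)(i) stating that $\mathrm{Spec}(R_0)^{\leq 2} \subseteq \ft^i(M)^{\leq 3}$. Hence the proof is a two-line deduction.

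\begin{proof}
Since $M$ is torsion-free over $R_0$, every graded component $M_n$ is a torsion-free $R_0$-module; in particular $M_n$ is torsion-free for all $n \gg 0$. Hence Corollary~\ref{3.5} applies and shows that $M$ is uniformly tame at level $i$ along $\ft^i(M)^{\leq 3}$ and that $E := \mathrm{Spec}(R_0)^{\leq 3} \setminus \ft^i(M)$ is finite. By Proposition~\ref{2.8}(b), Lemma~\ref{2.6} and Remark~\ref{2.4}(B)(i) we have
\[
E = \mathrm{Spec}(R_0)^{\leq 3} \setminus \ft^i(M)^{\leq 3} \subseteq C^i(M) \subseteq \mathrm{Spec}(R_0)^{=3},
\]
so $E$ consists of finitely many primes of height $3$. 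As $\mathrm{Spec}(R_0)^{\leq 3} \setminus E = \ft^i(M)^{\leq 3} \subseteq \ft^i(M)$, the module $M$ is uniformly tame at level $i$ along the set $\mathrm{Spec}(R_0)^{\leq 3} \setminus E$, which is obtained from $\mathrm{Spec}(R_0)^{\leq 3}$ by removing the finitely many height-$3$ primes in $E$.
\end{proof}
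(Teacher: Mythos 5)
Your proof is correct and takes essentially the same route as the paper, which simply says the corollary is clear by Corollary~\ref{3.5}; you have merely spelled out the (easy) reduction and the bookkeeping showing the exceptional primes have height exactly $3$.
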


\begin{proof} This is clear by Corollary~\ref{3.5}.
\end{proof}

Our next aim is to replace the requirement that $M_n$ is $R_0$ torsion-free for all $n \gg 0$, which was used
in Corollary~\ref{3.5} by a weaker condition. We begin with the following finiteness result for certain subsets of critical sets:

\begin{prop}\label{3.7}
Let $R_{0}$ be a domain, let $i\in \mathbb{N}$ and let $x\in R_{0}\setminus \{0\}$ be such
that $x\it{\Gamma}_{(x)}(M)= 0$. Then
\\{\rm(a)} $[C^{i}(M)\setminus \large[ C^{i}(M/ \it{\Gamma}_{(x)}(M))\cup
[\overline{\fP^{i- 1}(M/xM)}\cap \overline{\fP^{i+ 1}(\it{\Gamma}_{(x)}(M))}]^{= \mathrm{3}}\large]$
is a finite set.
\\{\rm(b)} If $x \in \fa^i(M)$, then the set $C^i(M/{\Gamma}_{(x)}(M))$ and hence also the set
$$C^i(M)\setminus \big[[\overline{\fP^{i-1}(M/xM)}\cap \overline{\fP^{i+1}({\Gamma}_{(x)}(M))}]^{=3}
\setminus C^i(M/{\Gamma}_{(x)}(M))\big]$$
is finite.
\end{prop}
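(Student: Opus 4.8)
The plan is to prove part (a) first and then derive part (b) as a consequence. For part (a), let me write $\Gamma := \Gamma_{(x)}(M)$ and consider the short exact sequence of graded $R$-modules $0 \to \Gamma \to M \to M/\Gamma \to 0$, which induces a long exact sequence of local cohomology modules, in particular for each $n \in \mathbb{Z}$ an exact sequence
\[
H^{i-1}_{R_{+}}(M/\Gamma)_n \to H^i_{R_{+}}(\Gamma)_n \to H^i_{R_{+}}(M)_n \to H^i_{R_{+}}(M/\Gamma)_n \to H^{i+1}_{R_{+}}(\Gamma)_n.
\]
Since $x\Gamma = 0$ we have $\Gamma = \Gamma_{R_{+}}(\Gamma)$ on large degrees ... more precisely, since $\Gamma$ is $x$-torsion one checks that $H^j_{R_{+}}(\Gamma)$ relates to $H^j_{R_{+}}(M/xM)$ shifted; the key technical input I would use is that $\Gamma_{(x)}(M)$ and $M/xM$ agree up to $R_{+}$-cohomology in high homological degree, so that $H^{i+1}_{R_{+}}(\Gamma_{(x)}(M))$ can be compared, in codimension $\leq 2$, with $H^{i-1}_{R_{+}}(M/xM)$ via the Čech or Koszul complex on $x$. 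Concretely, from $0 \to M \xrightarrow{x} M/\Gamma \to M/(xM+\Gamma) \to 0$ (using $x\Gamma=0$ so that multiplication by $x$ gives an injection $M/\Gamma \hookrightarrow M/\Gamma$ — wait, more carefully $x: M/\Gamma \to M/\Gamma$ is injective) one also gets exact sequences linking $H^{i}_{R_{+}}(M/\Gamma)_n$ and $H^{i}_{R_{+}}(M/\Gamma)_n$ and $H^i_{R_{+}}(M/xM)_n$.

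The heart of the argument: take $\fp_0 \in C^i(M)$, so by Lemma~\ref{2.6} we have $\mathrm{height}(\fp_0) = 3$ and $\fp_0$ is a minimal associated prime of $H^i_{R_{+}}(M)_n$ for some $n \leq n^i(M)$. Localize the five-term exact sequence above at $\fp_0$; the module $(H^i_{R_{+}}(M)_n)_{\fp_0}$ has finite length $\neq 0$. If $\fp_0 \notin C^i(M/\Gamma)$, then — because $\fp_0$ has height $3$ and using Lemma~\ref{2.6} applied to $M/\Gamma$ together with Reminder and Remark~\ref{2.2}(D) — the module $(H^i_{R_{+}}(M/\Gamma)_n)_{\fp_0}$ is either $0$ or its associated primes away from $\overline{\fP^i(M/\Gamma)}$ all have height $\ge 3$, forcing (via the exact sequence and the finite-length hypothesis on the middle term) that the nonzero finite-length submodule of $(H^i_{R_{+}}(M)_n)_{\fp_0}$ must come from $(H^i_{R_{+}}(\Gamma)_n)_{\fp_0}$, i.e. the map $(H^i_{R_{+}}(\Gamma)_n)_{\fp_0} \to (H^i_{R_{+}}(M)_n)_{\fp_0}$ is nonzero, so $\fp_0 \in \mathrm{Supp}_{R_0}(H^i_{R_{+}}(\Gamma)_n)$, hence (height $3$ again) $\fp_0 \in \mathrm{Ass}_{R_0}(H^i_{R_{+}}(\Gamma)_n)^{=3}$. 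Now $x\Gamma = 0$, so $H^i_{R_{+}}(\Gamma)$ is annihilated by a nonzero element of $R_0$, i.e. $\Gamma$ satisfies the hypothesis of Lemma~\ref{3.3} with $k=1$ (all primes in $\mathrm{Ass}^*_{R_0}(\Gamma)$ have height $\geq 1$ since $\Gamma$ is $x$-torsion over the domain $R_0$). Hence $\mathrm{Ass}_{R_0}(H^i_{R_{+}}(\Gamma)_n)^{\leq 3}$ is asymptotically stable and, more to the point, $C^i(\Gamma)$ is finite — but I want the $\fp_0$'s arising here, which lie in $\mathrm{Ass}_{R_0}(H^i_{R_{+}}(\Gamma)_n)^{=3}$ for $n\le n^i(M)$, to form a finite set, and this is exactly what asymptotic stability of $\mathrm{Ass}_{R_0}(H^i_{R_{+}}(\Gamma)_n)^{\leq 3}$ gives. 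The remaining case is that $\fp_0 \in C^i(M/\Gamma)$; those are allowed. But there is a subtlety: it may happen that neither $(H^i_{R_{+}}(M/\Gamma)_n)_{\fp_0}$ gives the finite-length piece nor does $(H^i_{R_{+}}(\Gamma)_n)_{\fp_0}$ cleanly; this is precisely where the exceptional set $[\overline{\fP^{i-1}(M/xM)} \cap \overline{\fP^{i+1}(\Gamma)}]^{=3}$ enters — when $\fp_0$ lies simultaneously in the closure of the asymptotic associated primes of $H^{i-1}_{R_{+}}(M/\Gamma)$ (which I relate to $H^{i-1}_{R_{+}}(M/xM)$) and of $H^{i+1}_{R_{+}}(\Gamma)$, the two boundary maps in the long exact sequence can conspire so that $\fp_0$ is forced into neither $\mathrm{Supp}(H^i_{R_{+}}(\Gamma)_n)$ nor $C^i(M/\Gamma)$. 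Excluding that intersection set removes these. So $C^i(M) \setminus [C^i(M/\Gamma) \cup [\overline{\fP^{i-1}(M/xM)} \cap \overline{\fP^{i+1}(\Gamma)}]^{=3}]$ embeds into $\bigcup_{n \leq n^i(M)} \mathrm{Ass}_{R_0}(H^i_{R_{+}}(\Gamma)_n)^{=3}$, which is finite by Lemma~\ref{3.3} (applied with $k=1$ to $\Gamma$), proving (a).

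For part (b): assume $x \in \fa^i(M)$. By Reminder~\ref{3.1}(B) (the ``most significant application''), $x \in \fa^i(M/\Gamma_{(x)}(M))$. Moreover $M/\Gamma_{(x)}(M)$ is $x$-torsion-free, and $x$ is a nonzero element of the domain $R_0$, so $x$ is a nonzero divisor on $M/\Gamma$, hence $x \in \mathrm{NZD}^*_{R_0}(M/\Gamma) \cap \fa^i(M/\Gamma)$. Therefore Theorem~\ref{3.4} applies to $M/\Gamma$ and gives that $C^i(M/\Gamma_{(x)}(M))$ is finite. Plugging this into the set displayed in part (a): $C^i(M) \setminus [C^i(M/\Gamma) \cup [\overline{\fP^{i-1}(M/xM)}\cap\overline{\fP^{i+1}(\Gamma)}]^{=3}]$ is finite, and adding back the finite set $C^i(M/\Gamma)$ shows $C^i(M) \setminus [[\overline{\fP^{i-1}(M/xM)}\cap\overline{\fP^{i+1}(\Gamma)}]^{=3} \setminus C^i(M/\Gamma)]$ is finite, which is the asserted statement.

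The main obstacle I anticipate is the bookkeeping around the five-term exact sequence at the borderline height $3$: one must be careful that ``$\fp_0 \notin C^i(M/\Gamma)$ and $\mathrm{height}(\fp_0)=3$'' genuinely forces $(H^i_{R_{+}}(M/\Gamma)_n)_{\fp_0}$ to contribute no nonzero finite-length submodule — this uses that outside $\overline{\fP^i(M/\Gamma)}$ and outside $C^i(M/\Gamma)$, height $3$ primes are not in $\mathrm{Supp}_{R_0}(H^i_{R_{+}}(M/\Gamma)_n)$ at all (Proposition~\ref{2.8}(b) for $M/\Gamma$), together with the corresponding vanishing statement — and likewise the precise identification of which primes the two connecting homomorphisms can ``hide'', forcing the exact form of the exceptional intersection set. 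The comparison $H^{i-1}_{R_{+}}(M/\Gamma)$ versus $H^{i-1}_{R_{+}}(M/xM)$ in codimension $\leq 2$ (needed to write $\overline{\fP^{i-1}(M/xM)}$ rather than $\overline{\fP^{i-1}(M/\Gamma)}$) via the sequence $0 \to M/\Gamma \xrightarrow{x} M/\Gamma \to M/(xM+\Gamma)=M/xM \to 0$ is routine but must be spelled out.
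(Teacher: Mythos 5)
Part (b) of your proposal is correct and matches the paper's argument: you correctly invoke Reminder~\ref{3.1}(B), observe that $x$ is a non-zero divisor on $M/\Gamma_{(x)}(M)$, apply Theorem~\ref{3.4} to get finiteness of $C^i(M/\Gamma_{(x)}(M))$, and the elementary set manipulation from (a) then yields (b).

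Part (a) has two genuine gaps. First, your identification ``$M/(xM+\Gamma)=M/xM$'' is false in general (take $M=R_0 \oplus R_0/(x)$: then $\Gamma_{(x)}(M)=0\oplus R_0/(x)$ is not contained in $xM=xR_0\oplus 0$). The correct short exact sequence, which the paper uses, is
$$0\longrightarrow M/\Gamma_{(x)}(M)\stackrel{x}{\longrightarrow} M\longrightarrow M/xM\longrightarrow 0,$$
where the first map is multiplication by $x$ on $M$ factored through $M/\Gamma_{(x)}(M)$ (injective since $x\Gamma_{(x)}(M)=0$). Note the middle term is $M$, not $M/\Gamma$, and the cokernel is genuinely $M/xM$.

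Second, and more seriously, in the case $(H^i_{R_+}(M/\Gamma_{(x)}(M))_n)_{\fp_0}=0$ you conclude from the single five-term sequence arising from $0\to\Gamma_{(x)}(M)\to M\to M/\Gamma_{(x)}(M)\to 0$ that the map $(H^i_{R_+}(\Gamma_{(x)}(M))_n)_{\fp_0}\to(H^i_{R_+}(M)_n)_{\fp_0}$ is surjective onto a nonzero finite-length module, and then assert ``so $\fp_0\in\mathrm{Supp}_{R_0}(H^i_{R_+}(\Gamma_{(x)}(M))_n)$, hence (height $3$ again) $\fp_0\in\mathrm{Ass}_{R_0}(H^i_{R_+}(\Gamma_{(x)}(M))_n)^{=3}$.'' That deduction is invalid: a surjection onto a nonzero finite-length module shows only that $\fp_0$ is in the support of the source, not that $\fp_0$ is a \emph{minimal} or \emph{associated} prime of it, and height $3$ plus membership in the support does not give membership in Ass. The paper avoids this by using the sequence displayed above, which in the vanishing case gives an \emph{injection} $(H^i_{R_+}(M)_n)_{\fp_0}\hookrightarrow(H^i_{R_+}(M/xM)_n)_{\fp_0}$; embedding a nonzero finite-length $(R_0)_{\fp_0}$-module into $(H^i_{R_+}(M/xM)_n)_{\fp_0}$ really does put $\fp_0$ in $\mathrm{Ass}_{R_0}(H^i_{R_+}(M/xM)_n)$, and then Lemma~\ref{3.3} (with $k=1$, applied to $M/xM$) supplies the finiteness. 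You should therefore run the argument with \emph{both} short exact sequences (the one involving $M/xM$ above and the one involving $\Gamma_{(x)}(M)$), using the first to produce the injection in the vanishing case and both connecting maps to force $\fp_0\in\overline{\fP^{i-1}(M/xM)}\cap\overline{\fP^{i+1}(\Gamma_{(x)}(M))}$ in the infinite-length case.
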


\begin{proof}
(a): Fix an integer $n_{0} \leq n^{i}(M/xM), n^{i}(\it{\Gamma}_{(x)}(M)),
n^{i}(M), n^{i}(M/ \it{\Gamma}_{(x)}(M))$ and let $\fp_{0}\in C^{i}(M)$.
Then $\fp_{0}\in \text{min Ass}_{R_{0}}(H^{i}_{R_{+}}(M)_{n})$ for
some $n\leq n^{i}(M)$. If $n_{0}\leq n$, $\fp_{0}$ thus belongs to
the finite set $\bigcup_{m\geq
n_{0}}\text{Ass}_{R_{0}}(H^{i}_{R_{+}}(M)_{m})$. So, let $n< n_{0}$.
The graded short exact sequences
\[0\longrightarrow M/{\Gamma}_{(x)}(M)\longrightarrow M\longrightarrow M/xM\longrightarrow 0\]
and
\[0\longrightarrow {\Gamma}_{(x)}(M)\longrightarrow M\longrightarrow M/ {\Gamma}_{(x)}(M)\longrightarrow 0\] imply
exact sequences
\[(H^{i-1}_{R_{+}}(M/xM)_{n})_{\fp_{0}}\longrightarrow
(H^{i}_{R_{+}}(M/{\Gamma}_{(x)}(M))_{n})_{\fp_0} \longrightarrow
(H^{i}_{R_{+}}(M)_{n})_{\fp_0}\longrightarrow
(H^{i}_{R_{+}}(M/xM)_n)_{\fp_0}\] and
\[(H^{i}_{R_{+}}(M)_{n})_{\fp_0}\longrightarrow (H^{i}_{R_{+}}(M/ \it{\Gamma}_{(x)}(M))_{n})_{\fp_0}
\longrightarrow (H^{i+1}_{R_{+}}({\Gamma}_{(x)}(M))_{n})_{\fp_0}.\]
Assume that $\fp_{0}\notin C^{i}(M/{\Gamma}_{(x)}(M))$. Then
$(H^{i}_{R_{+}}(M/{\Gamma}_{(x)}(M))_{n})_{\fp_{0}}$ either vanishes
or is an $(R_{0})_{\fp_{0}}$-module of infinite length. In the first
case we have $(H^{i}_{R_{+}}(M)_{n})_{\fp_{0}}\subseteq (H^{i}_{R_{+}}(M/xM)_{n})_{\fp_{0}}$. As
$(H^{i}_{R_{+}}(M)_{n})_{\fp_{0}}$ is a non-zero
$(R_{0})_{\fp_{0}}$-module of finite length it follows $\fp_{0}\in
\mathrm{Ass}_{R_{0}} (H^{i}_{R_{+}}(M/xM)_{n})$. So $\fp_{0}$ belongs
to the finite set $\mathrm{Ass}_{R_{0}}(H^{i}_{R_{+}}(M/xM))^{\leq 3}$ (s. Remark 3.3).

Assume now that $(H^{i}_{R_{+}}(M/ \it{\Gamma}_{(x)}(M))_{n})_{\fp_{0}}$
is not of finite length. Then, by the above sequences
$(H^{i-1}_{R_{+}}(M/xM)_{n})_{\fp_{0}}$ and $(H^{i+1}_{R_{+}}(\it{\Gamma}_{(x)}(M))_{n})_{\fp_{\mathrm{0}}}$
are both of infinite length, so that $\fp_{0}\in \overline{\fP^{i- 1}(M/xM)}$
and $\fp_{0}\in \overline{\fP^{i+ 1}(\it{\Gamma}_{(x)}(M))}$.

(b): According to Reminder~\ref{3.1} (B) we have $x \in \fa^i(M/{\Gamma}_{(x)}(M))$. As moreover it holds
$x \in \mathrm{NZD}_{R_0}(M/{\Gamma}_{(x)}(M))$ our claim follows be Theorem~\ref{3.4}.
\end{proof}

\begin{cor}\label{3.8}

Let $i\in \mathbb{N}_{0}$, let $R_{0}$ be a domain and assume that $\mathrm{height}(\fp_{0})\geq 3$ \textit{for all}
$\fp_{0}\in  \mathrm{Ass}_{R_{0}}^{*}(M)\backslash \big(\{0\}\cup \overline{\fP^{i}(M)}\big)$. \textit{Then $C^{i}(M)$ is a
finite set. In particular the set $\mathrm{Spec}(R_0)^{\leq 3} \setminus \ft^i(M)$ is finite and $M$ is uniformly tame at
level $i$ along the set $\ft^i(M)^{\leq 3}$.}
\end{cor}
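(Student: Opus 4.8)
The plan is to isolate the range $i\le f(M)$, where the results of Section~2 already give everything, and then, for $i>f(M)$, to peel off the submodule $\Gamma_{(x)}(M)$ by means of Proposition~\ref{3.7}; the role of the unmixedness hypothesis will be to show that the one closed set which Proposition~\ref{3.7} fails to control is swallowed by $\overline{\fP^i(M)}$, and the latter is disjoint from $C^i(M)$ by construction.

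First I would dispose of the case $i\le f(M)$: by Remark~\ref{2.4}(A) the module $M$ is uniformly tame at level $i$ along all of $\mathrm{Spec}(R_0)$, so $\ft^i(M)^{\le3}=\mathrm{Spec}(R_0)^{\le3}$ is (trivially) open in itself and $M$ is uniformly tame along it; hence $C^i(M)$ is finite by Proposition~\ref{2.8}(c). Now assume $i>f(M)$; since $f(M)>0$ this forces $i\ge2$, in particular $i\in\mathbb N$ and $i+1\ge2$. By Reminder~\ref{3.1}(A) we have $\fa^i(M)\ne0$, so I would choose $x\in\fa^i(M)\setminus\{0\}$ and, after replacing $x$ by a suitable power — still a non-zero element of the ideal $\fa^i(M)$, as $R_0$ is a domain — arrange that $\Gamma_{(x)}(M)=(0:_Mx)$, that is $x\Gamma_{(x)}(M)=0$. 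Proposition~\ref{3.7}(b) then applies and yields that $C^i(M/\Gamma_{(x)}(M))$ is finite and that $C^i(M)\setminus\big(D\setminus C^i(M/\Gamma_{(x)}(M))\big)$ is finite, where $D:=\big[\overline{\fP^{i-1}(M/xM)}\cap\overline{\fP^{i+1}(\Gamma_{(x)}(M))}\big]^{=3}$. So it only remains to show $C^i(M)\cap D=\emptyset$, and since $C^i(M)$ is disjoint from $\overline{\fP^i(M)}$ (Lemma~\ref{2.6}, Definition~\ref{2.7}) while $D\subseteq\overline{\fP^{i+1}(\Gamma_{(x)}(M))}$, it is enough to prove $\overline{\fP^{i+1}(\Gamma_{(x)}(M))}\subseteq\overline{\fP^i(M)}$.

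The heart of the argument is this last inclusion. Write $N:=\Gamma_{(x)}(M)$. Being a finitely generated graded submodule of $M$ annihilated by a power of the non-zero element $x$, the module $N$ satisfies $\mathrm{Ass}^*_{R_0}(N)\subseteq\mathrm{Ass}^*_{R_0}(M)$ and $0\notin\mathrm{Ass}^*_{R_0}(N)$. Since $i\ge1$, a high truncation does not change $H^{i+1}_{R_+}$ (the quotient $N/N_{\ge n_0}$ is $R_+$-torsion), and support of local cohomology is contained in support of the module, so $\mathrm{Supp}_{R_0}\big(H^{i+1}_{R_+}(N)_n\big)\subseteq\mathrm{Supp}^*_{R_0}(N)=\overline{\mathrm{Ass}^*_{R_0}(N)}$ for every $n$; in particular $\fP^{i+1}(N)\subseteq\overline{\mathrm{Ass}^*_{R_0}(N)}^{\le2}$. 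Now any $\fq_0\in\overline{\mathrm{Ass}^*_{R_0}(N)}^{\le2}$ contains some $\mathfrak s_0\in\mathrm{Ass}^*_{R_0}(N)$ with $\mathrm{height}(\mathfrak s_0)\le\mathrm{height}(\fq_0)\le2$ and $\mathfrak s_0\ne0$; then $\mathfrak s_0\in\mathrm{Ass}^*_{R_0}(M)\setminus\{0\}$ has height $\le2$, whence the hypothesis of the corollary forces $\mathfrak s_0\in\overline{\fP^i(M)}$, and therefore $\fq_0\in\overline{\fP^i(M)}$. Thus $\fP^{i+1}(N)\subseteq\overline{\fP^i(M)}$ and, passing to closures, $\overline{\fP^{i+1}(N)}\subseteq\overline{\fP^i(M)}$. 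This gives $C^i(M)\cap D=\emptyset$, so $C^i(M)=C^i(M)\setminus\big(D\setminus C^i(M/\Gamma_{(x)}(M))\big)$ is finite, and the ``in particular'' statements then follow at once from Proposition~\ref{2.8}(c).

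The one place I expect friction is the inclusion $\overline{\fP^{i+1}(\Gamma_{(x)}(M))}\subseteq\overline{\fP^i(M)}$: one must keep honest that truncating $\Gamma_{(x)}(M)$ leaves $H^{i+1}_{R_+}$ untouched (this is exactly what the reduction to $i>f(M)$, hence $i+1\ge2$, is for), that $0$ is genuinely excluded from the asymptotic associated primes of the torsion module $\Gamma_{(x)}(M)$, and that passing to topological closures does not escape $\overline{\fP^i(M)}$. Everything else is a direct appeal to Reminder~\ref{3.1}, Proposition~\ref{3.7} and Proposition~\ref{2.8}.
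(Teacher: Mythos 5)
Your proof is correct, but it takes a genuinely different route from the paper's. The paper first truncates $M$ so that $\mathrm{Ass}^*_{R_0}(M)=\mathrm{Ass}_{R_0}(M)$, disposes of the case $0\notin\mathrm{Ass}_{R_0}(M)$ by Lemma~\ref{3.3} with $k=1$, and in the remaining case chooses $x$ in the ideal $\fb_0=\bigcap_{\fp_0\in\mathrm{Ass}_{R_0}(M)\setminus\{0\}}\fp_0$; this makes $\Gamma_{(x)}(M)=\Gamma_{\fb_0}(M)$ a $\fb_0$-torsion module, so that any $\fq_0\in C^i(M)\cap\overline{\fP^{i+1}(\Gamma_{(x)}(M))}^{=3}$ contains a minimal prime $\fp_0$ of $\fb_0$; the unmixedness hypothesis then forces $\fp_0=\fq_0\in\mathrm{Ass}^*_{R_0}(M)$, so this exceptional set is merely finite, and the paper finishes via Corollary~\ref{3.5} and Proposition~\ref{3.7}(a). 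You instead split off the range $i\le f(M)$ at the outset, pick $x$ in $\fa^i(M)\setminus\{0\}$ (a different non-zero ideal), and invoke Proposition~\ref{3.7}(b) directly; the unmixedness hypothesis is then converted into the inclusion $\overline{\fP^{i+1}(\Gamma_{(x)}(M))}\subseteq\overline{\fP^i(M)}$, which makes the exceptional set $C^i(M)\cap D$ actually \emph{empty} rather than just finite. Your version avoids introducing the auxiliary ideal $\fb_0$ and the case split on whether $0\in\mathrm{Ass}_{R_0}(M)$, and it isolates a slightly cleaner structural fact (that the unmixedness hypothesis pushes the closure of $\fP^{i+1}(\Gamma_{(x)}(M))$ into $\overline{\fP^i(M)}$); the paper's version, by contrast, gets the more specific choice $\Gamma_{(x)}(M)=\Gamma_{\fb_0}(M)$ for free and leans on the simpler observation that the troublesome primes land inside $\mathrm{Ass}^*_{R_0}(M)$. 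Both arguments rest on the same two pillars — truncation-invariance of $H^{i+1}_{R_+}$ and the hypothesis on low-height asymptotic associated primes — and either one completes the proof.
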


\begin{proof}
Let $m(M)\in \mathbb{Z}$ be as in Reminder and Remark~\ref{2.2} (A) so that $\mathrm{Ass}_{R_{0}}(M_{n})=
\mathrm{Ass}_{R_{0}}^{*}(M)$ for all $n\geq m(M)$.
As $H^{i}_{R_{+}}(M)$ and $H^{i}_{R_{+}}(M_{\geq m(M)})$ differ only in finitely many degrees we may replace $M$ by $M_{\geq m(M)}$
and hence assume that $\mathrm{Ass}_{R_{0}}^{*}(M)= \mathrm{Ass}_{R_{0}}(M)$. If $0\notin \mathrm{Ass}_{R_{0}}(M)$
we get our claim by Lemma~\ref{3.3}.
So, let $0\in  \mathrm{Ass}_{R_{0}}(M)$ and consider the non-zero ideal
$\fb_{0}:= \bigcap _{\fp_{0}\in \mathrm{Ass}_{R_{0}}(M)\setminus \{0\}}\fp_{0}$.
Then $\mathrm{Ass}_{R_{0}}(M/{\Gamma}_{\fb_{0}}(M))= \{0\} $ so that $M/{\Gamma}_{\fb_{0}}(M)$
is torsion-free over $R_{0}$. Let $x\in \fb_{0}\setminus \{0\}$ with $x{\Gamma}_{(x)}(M) = 0$. Then it follows that
${\Gamma}_{\fb_{0}}(M)= {\Gamma}_{(x)}(M)$. By Corollary~\ref{3.5} we therefore obtain that
$C^{i}(M/{\Gamma}_{(x)}(M))$ is finite. According to Proposition~\ref{3.7} (a) it thus suffices to show that
$C^{i}(M)\cap \overline{\fP^{i + 1}({\Gamma}_{\fb_{0}}(M))}^{= 3}$ is finite. So, let $\fq_{0}$ be an element of this latter
set. Then
$\mathrm{height}(\fq_{0})= 3$ and $\fq_{0}\notin \overline{\fP^{i}(M)}$. Moreover, there is a minimal prime $\fp_{0}$ of $\fb_{0}$ with
$\fp_{0}\subseteq \fq_{0}$. In particular $\fp_{0}\in \mathrm{Ass}_{R_{0}}(M)\setminus \{0\}$ and $\fp_{0}\notin \overline{\fP^{i}(M)}$.
So, by our hypothesis height$(\fp_{0})\geq 3$, whence
$\fq_{0}= \fp_{0}\in \mathrm{Ass}_{R_{0}}^{*}(M)\setminus \{0\}$. This shows that
$\overline{C^{i}(M)\cap \fP^{i + 1}({\Gamma}_{\fb_{0}}(M))}^{= 3}\subseteq \mathrm{Ass}_{R_{0}}^{*}(M)$ and hence proves our claim.
\end{proof}

\begin{rem}\label{3.9}
\rm{ Clearly Corollary~\ref{3.6} applies to the domain $R'$ constructed in \cite{CCHS} (s. Example~\ref{2.5}),
taken as a module over itself. In this example we have in particular $\ft^2(R')^{\leq 3} = \mathrm{Spec}(R'_{0}) \setminus \{\fm_0\}$.
Moreover the uniform tameness of $R'$ at level $2$ along this set can be verified by a direct calculation.}
\end{rem}

\section{Conditions on Neighbouring Cohomologies for Tameness in Codimensions $\leq 3$}

We keep the hypotheses and notations of the previous sections. So $R = \bigoplus_{n \in \mathbb{N}_0}R_n$ is a homogeneous
Noetherian ring whose base ring $R_0$ is essentially of finite type over a field and $M$ is a finitely generated graded $R$-module.

Our first result says that $M$ is tame in codimension $\leq 3$ at a given level $i \in \mathbb{N}$, if the two
neigbouring local cohomology modules $H^{i-1}_{R_{+}}(M)$ and $H^{i+1}_{R_{+}}(M)$ are ``asymptotically sufficiently small''.
We actually shall prove a more specific statement. To formulate it, we first introduce an appropriate notion.

\begin{defn rem}\label{4.1}
\rm{ (A) We say that a graded $R$-module $T = \bigoplus_{n \in \mathbb{Z}} T_n$ is \textit{almost Artinian} if there is
some graded submodule $N = \bigoplus_{n \in \mathbb{Z}} N_n \subseteq T$ such that $N_n = 0$ for all $n\ll 0$ and such
that the graded $R$-module $T/N$ is Artinian.

(B) A graded $R$-module $T$ which is the sum of an Artinian graded submodule and a Noetherian graded submodule clearly is
almost Artinian. Moreover, the property of being almost Artinian passes over to graded subquotients.

(C) As $R_0$ is Noetherian and $R$ is homogeneous each graded almost Artinian $R$-module $T$ has the property that
$\mathrm{dim}_{R_0}(T_n) \leq 0$ for all $n\ll 0$.

(D) Clearly an almost Artinian graded $R$-module is tame.}
\end{defn rem}

Now, we are ready to formulate and to prove the announced result.

\begin{thm}\label{4.2}
Let $i\in \mathbb{N}$ such that $\dim_{R_0}(H^{i-1}_{R_+}(M)_n)\leq 1$
and $dim_{R_0}(H^{i-2}_{R_+}(M)_n)\leq 2$ for all $n \ll 0$. Then the following statements hold.\\
{\rm(a)} The graded $R_{\fp_0}$-module $H^i_{R_{+}}(M)_{\fp_0}$ is almost Artinian for all
$\fp_0 \in \mathrm{Spec}(R_0)^{= 3} \setminus \overline{\fP^i(M)}$.
{\rm(b)} $\ft^i(M)^{\leq 3} = \mathrm{Spec}(R_0)^{\leq 3}$ and hence $M$ is tame at level $i$ in codimension $\leq 3$.
\end{thm}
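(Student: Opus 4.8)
The statement to prove is Theorem~\ref{4.2}: under the hypotheses $\dim_{R_0}(H^{i-1}_{R_+}(M)_n)\leq 1$ and $\dim_{R_0}(H^{i-2}_{R_+}(M)_n)\leq 2$ for all $n\ll 0$ (I read the second as $\dim_{R_0}(H^{i-2}_{R_+}(M)_n)\leq 2$; the displayed ``$i-2$'' versus ``$i+1$'' discrepancy in the excerpt should be reconciled before writing the final proof), one wants (a) that $H^i_{R_+}(M)_{\fp_0}$ is almost Artinian for every height-$3$ prime $\fp_0$ outside $\overline{\fP^i(M)}$, and (b) that $\ft^i(M)^{\leq 3}=\mathrm{Spec}(R_0)^{\leq 3}$. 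The strategy is to prove (a) first, then deduce (b) almost for free: for primes of height $\leq 2$ tameness already holds by Remark~\ref{2.4} (B)(i), for height-$3$ primes in $\overline{\fP^i(M)}$ one invokes the fact that $M$ is uniformly tame along each finite subset of $\fP^i(M)$ (and $\overline{\fP^i(M)}^{\leq 3}$ is handled via Proposition~\ref{2.8} (a) type arguments), and for the remaining height-$3$ primes, being almost Artinian forces tameness by Definition and Remark~\ref{4.1} (D).

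\textbf{Key steps for (a).} Fix $\fp_0\in\mathrm{Spec}(R_0)^{=3}\setminus\overline{\fP^i(M)}$. By graded flat base change, $H^j_{R_+}(M)_{\fp_0}\cong H^j_{(R_{\fp_0})_+}(M_{\fp_0})$ for all $j$, so after localizing we may assume $(R_0,\fm_0)$ is local of dimension $3$ with $\fm_0=\fp_0$, and that $\mathrm{Ass}_{R_0}(H^i_{R_+}(M)_n)$ contains no prime of height $\leq 2$ for $n\ll 0$ (this is exactly the meaning of $\fp_0\notin\overline{\fP^i(M)}$ combined with Reminder and Remark~\ref{2.2} (D)). Hence for $n\ll 0$ the $R_0$-module $H^i_{R_+}(M)_n$ is supported only at $\fm_0$, i.e. has dimension $\leq 0$, i.e. has finite length. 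So it already is ``pointwise Artinian'' in each low degree; what must be shown is that the whole graded module $H^i_{R_+}(M)_{\leq n_1}$ for suitable $n_1$ is Artinian as a graded $R$-module, equivalently that $H^i_{R_+}(M)$ is almost Artinian. The natural tool is the filtration of $H^i_{R_+}(M)$ coming from the exact sequences associated to the surjection $R=R_0[l_1,\dots,l_r]$ and, more usefully, an induction on the number of generators via hyperplane sections: choose a generic linear form $\ell\in R_1$ (using that $K$ may be assumed infinite after a harmless field extension, or using the filter-regular element technique of \cite{BS}) and consider $0\to (0:_M\ell)\to M\xrightarrow{\ell} M(1)\to M/\ell M(1)\to 0$. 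This yields long exact sequences relating $H^j_{R_+}(M)$ to $H^{j}_{R_+}(M/\ell M)$ and $H^{j}_{R_+}(0:_M\ell)$, and since $\ell$ is filter-regular the modules $(0:_M\ell)$ and the kernel/cokernel of multiplication are $R_+$-torsion, hence their higher local cohomology vanishes and the bottom one is finitely generated. Tracking dimensions through these sequences, the hypothesis $\dim_{R_0}(H^{i-1}_{R_+}(M)_n)\leq 1$ and $\dim_{R_0}(H^{i-2}_{R_+}(M)_n)\leq 2$ passes (with the dimension bounds unchanged or improved) to $M/\ell M$, while $\dim(M/\ell M)<\dim M$; so one can induct on $\dim_{R_0}M$. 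The base of the induction is where $\dim_{R_0}M$ is small enough that $H^i_{R_+}(M)$ is visibly finitely generated or visibly Artinian, and the inductive step shows that $H^i_{R_+}(M)$ is, up to a finitely generated (hence almost Artinian, after truncation) and an Artinian piece, controlled — and by Definition and Remark~\ref{4.1} (B) this suffices.

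\textbf{Deducing (b) and the expected obstacle.} Once (a) is in hand, part (b) follows: $\mathrm{Spec}(R_0)^{\leq 3}$ is the disjoint union of $\mathrm{Spec}(R_0)^{\leq 2}$, of $\big(\mathrm{Spec}(R_0)^{=3}\cap\overline{\fP^i(M)}\big)$, and of $\big(\mathrm{Spec}(R_0)^{=3}\setminus\overline{\fP^i(M)}\big)$; tameness holds on the first set by Remark~\ref{2.4} (B)(i); on the third set by (a) together with Definition and Remark~\ref{4.1} (D) applied to $H^i_{R_+}(M)_{\fp_0}$; and on the second set one argues that $\overline{\fP^i(M)}^{=3}$ meets $\mathrm{Spec}(R_0)^{\leq 3}$ in primes that are either generic points of $\overline{\fP^i(M)}$ (finitely many, lying in $\fP^i(M)$, along which $M$ is uniformly tame) or specializations thereof of height $3$; in the latter case such a prime $\fp_0$ contains some $\fq_0\in\fP^i(M)$ with $\mathrm{height}(\fq_0)\leq 2$, and then for $n\ll 0$ one has $\fq_0\in\mathrm{Ass}_{R_0}(H^i_{R_+}(M)_n)$, hence $\fp_0\in\mathrm{Supp}_{R_0}(H^i_{R_+}(M)_n)$ for all such $n$, giving the non-vanishing alternative of tameness. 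The main obstacle I anticipate is the inductive step in (a): making the dimension bookkeeping through the four-term and three-term exact sequences precise enough to guarantee that the exceptional contribution to $H^i_{R_+}(M)$ is genuinely Artinian-modulo-finitely-generated rather than merely ``small'', and in particular handling the term $H^{i-1}_{R_+}(M/\ell M)$ whose dimension bound one only knows after the generic hyperplane section argument has been set up correctly. Getting the filter-regular element in the right degree over a possibly finite residue field, and checking that truncation interacts well with ``almost Artinian'', are the delicate points; everything else is formal.
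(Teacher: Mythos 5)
Your proposal takes a genuinely different route from the paper, and the route you sketch has a genuine gap at its core.

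For part~(a), you correctly isolate the crux: in degrees $n \ll 0$ each graded piece $\big(H^i_{R_+}(M)_n\big)_{\fp_0}$ is of finite length (because $\fp_0 \notin \overline{\fP^i(M)}$ and $\mathrm{height}(\fp_0)=3$), but that alone does not make the graded $R_{\fp_0}$-module almost Artinian; one needs a genuinely module-theoretic statement. Your proposed solution --- an induction via a filter-regular $\ell\in R_1$ and the four-term sequence $0\to (0:_M\ell)\to M\xrightarrow{\ell}M(1)\to (M/\ell M)(1)\to 0$ --- is not carried out, and as sketched it contains a conceptual error: quotienting by $\ell\in R_1$ lowers $\dim_R M$, not $\dim_{R_0}M$, so ``$\dim(M/\ell M)<\dim M$; so one can induct on $\dim_{R_0}M$'' does not parse. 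You also give no base case and no verification that the dimension bounds on the neighbouring cohomologies descend through the long exact sequences. In short, the heart of (a) is missing, and it is not clear the hyperplane-section route can be made to work without substantially more input.

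The paper's proof of (a) is a short spectral-sequence argument that bypasses all of this. Fix $\fp_0\in\mathrm{Spec}(R_0)^{=3}\setminus\overline{\fP^i(M)}$ and consider the composite-functor Grothendieck spectral sequence
\[
E_2^{p,q}=H^p_{\fp_0}\big(H^q_{R_+}(M)\big)_{\fp_0}\;\Longrightarrow\;H^{p+q}_{\fp_0+R_+}(M)_{\fp_0}.
\]
Since $\mathrm{height}(\fp_0)=3$ we have $E_2^{p,q}=0$ for $p>3$, so the only possibly non-zero differentials out of $E_r^{0,i}$ land in $E_2^{2,i-1}$ and $E_3^{3,i-2}$. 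The hypotheses $\dim_{R_0}(H^{i-1}_{R_+}(M)_n)\leq 1$ and $\dim_{R_0}(H^{i-2}_{R_+}(M)_n)\leq 2$ for $n\ll 0$ annihilate those targets degree by degree via Grothendieck vanishing, whence $(E_2^{0,i})_n\cong(E_\infty^{0,i})_n$ for $n\ll 0$. The abutment $H^i_{\fp_0+R_+}(M)_{\fp_0}$ is Artinian, so $E_\infty^{0,i}$, and hence $E_2^{0,i}=H^0_{\fp_0(R_0)_{\fp_0}}\big(H^i_{R_+}(M)_{\fp_0}\big)$, is almost Artinian by Definition and Remark~\ref{4.1}~(B). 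Finally, $\fp_0\notin\overline{\fP^i(M)}$ and $\mathrm{height}(\fp_0)=3$ force $\dim_{(R_0)_{\fp_0}}\big((H^i_{R_+}(M)_n)_{\fp_0}\big)\leq 0$ for $n\ll 0$, so $H^0_{\fp_0(R_0)_{\fp_0}}(H^i_{R_+}(M)_{\fp_0})$ agrees with $H^i_{R_+}(M)_{\fp_0}$ in all low degrees, giving the claim. Incidentally, this also resolves the $i-2$ versus $i+1$ discrepancy you flagged: the theorem statement's $H^{i-2}$ is the correct hypothesis (it is what kills $E_2^{3,i-2}$), and the occurrences of $H^{i+1}$ in the introduction and in the proof's running text are typos.

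Your deduction of (b) from (a) is correct but needlessly elaborate; by Remark~\ref{2.4}~(B), $M$ is already uniformly tame along $\overline{\fP^i(M)}\cup\mathrm{Spec}(R_0)^{\leq 2}$, so once (a) supplies tameness on $\mathrm{Spec}(R_0)^{=3}\setminus\overline{\fP^i(M)}$ (via Definition and Remark~\ref{4.1}~(D)), part~(b) is immediate.
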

\begin{proof}

(a): Let $\fp_0\in \textrm{Spec}(R_0)^{=3} \setminus \overline{\fP^i(M)}$. We consider the Grothendieck spectral sequence
$$E_2^{p, q}= H^{p}_{\fp_0}(H^{q}_{R_+}(M))_{\fp_0}\underset{p}{\Rightarrow}H^{p+q}_{\fp_0+R_+}(M)_{\fp_0}.$$
By our assumption on the dimension of the $R_0$-modules $H^{i-1}_{R_{+}}(M)_n$ and $H^{i+1}_{R_{+}}(M)_n$, the $n$-th graded
component $(E_2^{p, q})_n$ of the graded $R_{{\fp}_0}$-module $E_2^{p, q}$ vanishes for all $n \ll 0$ if $(p, q)= (2, i-1)$
or $(p, q)= (3, i-2)$. Therefore
$$(E_2^{0, i})_n \cong (E_{\infty}^{0, i})_n, \quad \forall n\ll 0.$$
As the graded $R_{{\fp}_0}$-module $E_{\infty}^{0,i}$ is a subquotient of the Artinian $R_{\fp_0}$-module $H^{i}_{\fp_0+R_+}(M)_{\fp_0}$,
it follows by Definition and Remark~\ref{4.1} (B) that the graded $R_{\fp_0}$-module
$$H^0_{{{\fp}_0} R_{{\fp}_0}}\big(H^i_{R_{+}}(M)_{{\fp}_0}\big) \cong H^{0}_{\fp_0}(H^{i}_{R_+}(M))_{\fp_0} = E_{2}^{0,i}$$
is almost Artinian. Now, since $\fp_0\notin \overline{\fP^i(M)}$ and ${\fp}_0$ is of height $3$ we must have
$$\dim_{{R_0}_{{\fp}_0}}\big((H^i_{R_{+}}(M)_{{\fp}_0})_n\big) \leq 0, \quad \forall n\ll 0.$$
and hence  $H^0_{{{\fp}_0} R_{{\fp}_0}}\big(H^i_{R_{+}}(M)_{{\fp}_0}\big)$ and $H^{i}_{R_+}(M)_{\fp_0}$ coincide in all
degrees $n \ll 0$. Therefore $H^i_{R_{+}}(M)_{\fp_0}$ is indeed almost Artinian.

(b): This follows immediately from statement (a), as $\overline{\fP^i(M)} \subseteq \ft^i(M)$ (s. Remark~\ref{2.4} (B)).
\end{proof}

\begin{rem}\label{4.3} {\rm The domain $R'$ constructed in \cite{CCHS} (s. Example~\ref{2.5}), taken as a module over itself,
clearly cannot satisfy the hypotheses of Theorem~\ref{4.1} with $i = 2$ as it does not fulfill the corresponding
conclusion of this theorem. Indeed a direct calculation shows that $\text{dim}_{R'_0}(H^1_{R'_{+}}(R')_n) = 3$ for all $n < 0$.}
\end{rem}

Our next result says that the module $M$ is tame at level $i$ almost everywhere in codimension $\leq 3$ provided that $R_0$ is
a domain and the local cohomology module $H^{i-1}_{R_{+}}(M)$ is ``asymptotically very small``. Again, we aim to prove a more
specific result.

\begin{thm}\label{4.4}
Let $R_0$ be a domain and $i\in \mathbb{N}$ such that $\mathrm{dim}_{R_0}(H^{i-1}_{R_+}(M)) \leq 0$ for all $n \ll 0$.
Then the following statements hold.\\
{\rm(a)} There is a finite set $Z \subset \mathrm{Spec}(R_0)^{= 3}$ such that the graded $R_{\fp_0}$-module $H^i_{R_{+}}(M)_{\fp_0}$
is almost Artinian for all $\fp_0 \in \mathrm{Spec}(R_0)^{=3} \setminus \big(Z \cup \overline{\fP^i(M)}\big)$.\\
{\rm(b)} $\mathrm{Spec}(R_0)^{\leq 3}\setminus \ft^i(M)$ is a finite subset of $\mathrm{Spec}(R_0)^{= 3}$.
\end{thm}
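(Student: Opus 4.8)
The plan is to derive (b) from (a). Indeed $\mathrm{Spec}(R_0)^{\le 2}\subseteq\ft^i(M)$ and $\overline{\fP^i(M)}^{\le 3}\subseteq\ft^i(M)$ by Remark~\ref{2.4}(B), while an almost Artinian graded module is tame by Definition and Remark~\ref{4.1}(D); so once (a) is known we get $\mathrm{Spec}(R_0)^{\le 3}\setminus\ft^i(M)\subseteq Z\subseteq\mathrm{Spec}(R_0)^{=3}$, which is finite. To prove (a) I would first clear away the trivial cases: if $i<f(M)$ then $H^i_{R_+}(M)_n=0$ for all $n\le n^i(M)$, so $H^i_{R_+}(M)_{\fp_0}$ is Noetherian, hence almost Artinian, for every $\fp_0$; and if $i=f(M)$ the argument below runs with $Z=\emptyset$ because then $H^{i-1}_{R_+}(M)$ and $H^{i-2}_{R_+}(M)$ are finitely generated. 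So I may assume $i>f(M)$, in particular $i\ge 2$. Next, if $H^i_{R_+}(M)_n$ is a non-torsion $R_0$-module for all $n\ll 0$, then, $R_0$ being a domain, $(0)\in\mathrm{Ass}_{R_0}(H^i_{R_+}(M)_n)^{\le 2}=\fP^i(M)$ for $n\le n^i(M)$, whence $\overline{\fP^i(M)}=\mathrm{Spec}(R_0)$ and (a) holds vacuously with $Z=\emptyset$. So I may also assume that $H^i_{R_+}(M)_n$ is torsion over $R_0$ for all $n\ll 0$.

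Now I would fix $\fp_0\in\mathrm{Spec}(R_0)^{=3}\setminus\overline{\fP^i(M)}$ and run the Grothendieck spectral sequence $E_2^{p,q}=H^p_{\fp_0}(H^q_{R_+}(M))_{\fp_0}\Rightarrow H^{p+q}_{\fp_0+R_+}(M)_{\fp_0}$ of graded $R_{\fp_0}$-modules exactly as in the proof of Theorem~\ref{4.2}. As there, since $\mathrm{height}(\fp_0)=3$ and $\fp_0\notin\overline{\fP^i(M)}$, the $(R_0)_{\fp_0}$-modules $(H^i_{R_+}(M)_n)_{\fp_0}$ have dimension $\le 0$ for all $n\le n^i(M)$, so $E_2^{0,i}=H^0_{\fp_0}(H^i_{R_+}(M))_{\fp_0}$ and $H^i_{R_+}(M)_{\fp_0}$ agree in all these degrees; it is therefore enough to prove that $E_2^{0,i}$ is almost Artinian. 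Since $H^q_{R_+}(M)=0$ for $q<0$ and $H^p_{\fp_0}(-)=0$ for $p>3$, the only differentials leaving the $(0,i)$-spot are $d_2^{0,i}\colon E_2^{0,i}\to E_2^{2,i-1}$ and $d_3^{0,i}\colon E_3^{0,i}\to E_3^{3,i-2}$, so $E_\infty^{0,i}=E_4^{0,i}$ and the filtration $E_2^{0,i}\supseteq E_3^{0,i}\supseteq E_\infty^{0,i}$ has successive quotients $\mathrm{im}(d_2^{0,i})$, $\mathrm{im}(d_3^{0,i})$ and $E_\infty^{0,i}$. Here $E_\infty^{0,i}$ is a subquotient of the Artinian module $H^i_{\fp_0+R_+}(M)_{\fp_0}$, hence Artinian; and by the hypothesis $\dim_{R_0}(H^{i-1}_{R_+}(M)_n)\le 0$ for $n\ll 0$ the $n$-th component $H^2_{\fp_0}((H^{i-1}_{R_+}(M)_n)_{\fp_0})$ of $E_2^{2,i-1}$ vanishes both for $n\ll 0$ and (trivially) for $n\gg 0$, so $E_2^{2,i-1}$, and with it its submodule $\mathrm{im}(d_2^{0,i})$, is concentrated in finitely many degrees, hence Noetherian. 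As $E_2^{0,i}$ is bounded above, a routine check (using that an extension of a Noetherian graded module by an almost Artinian one which is bounded above is again almost Artinian, together with Definition and Remark~\ref{4.1}(B)) reduces the problem to showing that $\mathrm{im}(d_3^{0,i})$ is almost Artinian; and since $\mathrm{im}(d_3^{0,i})$ is a subquotient of $E_2^{3,i-2}=H^3_{\fp_0}(H^{i-2}_{R_+}(M))_{\fp_0}$, it suffices to produce a finite set $Z\subseteq\mathrm{Spec}(R_0)^{=3}$ such that $E_2^{3,i-2}$ is almost Artinian whenever $\fp_0\in\mathrm{Spec}(R_0)^{=3}\setminus(Z\cup\overline{\fP^i(M)})$.

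This last step is the heart of the matter, and I expect it to be the main difficulty. It splits according to whether $H^{i-2}_{R_+}(M)_n$ is torsion over $R_0$ for $n\ll 0$. In the torsion case there is nothing to do: then $\dim_{(R_0)_{\fp_0}}((H^{i-2}_{R_+}(M)_n)_{\fp_0})\le 2$ for $n\ll 0$ at every height-$3$ prime, so the $n$-th component $H^3_{\fp_0}((H^{i-2}_{R_+}(M)_n)_{\fp_0})$ of $E_2^{3,i-2}$ vanishes for $n\ll 0$, $E_2^{3,i-2}$ is concentrated in finitely many degrees, and $Z=\emptyset$ works. The hard case is when $H^{i-2}_{R_+}(M)_n$ is non-torsion for $n\ll 0$ — equivalently $(0)\in\mathrm{Ass}_{R_0}(H^{i-2}_{R_+}(M)_n)$ for $n\ll 0$ — since then $E_2^{3,i-2}$ has non-zero components in arbitrarily negative degrees at every height-$3$ prime and need not even be almost Artinian, so one must still show that the particular subquotient $\mathrm{im}(d_3^{0,i})$ of it is almost Artinian away from a finite set. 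Here I would exploit that $R_0$ is a domain: by Reminder~\ref{3.1}(A) the ideal $\fa^{i-2}(M)$ is non-zero, and for any $x\in\fa^{i-2}(M)\setminus\{0\}$ the localization $(H^{i-2}_{R_+}(M))_{\fp_0}$ is torsion-free over $(R_0)_{\fp_0}$ for every $\fp_0\notin V(x)$; combining this with graded local duality over the Gorenstein-covered local rings $(R_0)_{\fp_0}$ (which expresses the components of $E_2^{3,i-2}$ through duals of the Hom-modules of the torsion-free parts of $(H^{i-2}_{R_+}(M)_n)_{\fp_0}$ into a canonical module) and with the asymptotic stability of $\mathrm{Ass}_{R_0}(H^{i-2}_{R_+}(M)_n)$ in codimension $\le 2$, I would, in the spirit of the proof of Theorem~\ref{3.4} (passing to a quotient $M/xM$), reduce the finiteness of $Z$ to the finiteness, by Lemma~\ref{3.3}, of a set of associated primes of $H^i_{R_+}$ of an auxiliary module in one fixed degree. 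Once $E_2^{3,i-2}$, hence $\mathrm{im}(d_3^{0,i})$, hence $E_2^{0,i}$, hence $H^i_{R_+}(M)_{\fp_0}$, is seen to be almost Artinian for $\fp_0$ outside $Z\cup\overline{\fP^i(M)}$, part (a) follows, and with it, as noted at the outset, part (b).
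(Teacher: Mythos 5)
Your reduction from (b) to (a), the dismissal of the cases $i\le f(M)$ and of the non-torsion case, the spectral-sequence set-up copied from Theorem~\ref{4.2}, and the bookkeeping that pins the difficulty on $\mathrm{im}(d_3^{0,i})\subseteq E_3^{3,i-2}$ are all correct. But this is exactly where the proof stops being a proof. Theorem~\ref{4.4} deliberately drops the hypothesis on $H^{i-2}_{R_+}(M)$ that Theorem~\ref{4.2} uses to kill the $(3,i-2)$-spot, and you acknowledge that when $H^{i-2}_{R_+}(M)_n$ is non-torsion for $n\ll 0$ the module $E_2^{3,i-2}$ has non-zero components in arbitrarily negative degrees at every relevant height-$3$ prime. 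At that point the final paragraph is not an argument: it names $\fa^{i-2}(M)$, gestures at graded local duality over $(R_0)_{\fp_0}$ and at Lemma~\ref{3.3} applied to ``an auxiliary module in one fixed degree,'' but does not say which module, which degree, how duality actually constrains the image of $d_3^{0,i}$ (as opposed to the whole of $E_2^{3,i-2}$, which is genuinely large), or why the exceptional set is finite. Without these the hard case is open, and the hard case is precisely the content of the theorem beyond Theorem~\ref{4.2}.

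For comparison, the paper does not touch the Grothendieck spectral sequence here. It picks $x\in\fa^i(M)\setminus\{0\}$ with $x\Gamma_{(x)}(M)=0$, applies Lemma~\ref{3.3} with $k=1$ to $M/xM$ and to $\Gamma_{(x)}(M)$ to obtain a finite set $Z$, and then works with the two short exact sequences $0\to M/\Gamma_{(x)}(M)\to M\to M/xM\to 0$ and $0\to\Gamma_{(x)}(M)\to M\to M/\Gamma_{(x)}(M)\to 0$. Localizing at $\fp_0\notin Z\cup\overline{\fP^i(M)}$ and using that $\pi\circ\phi$ is multiplication by $x$ (and replacing $x$ by $x^2$), the paper first shows ${\Gamma}_{\fp_0(R_0)_{\fp_0}}((H^i_{R_+}(M/\Gamma_{(x)}(M))_n)_{\fp_0})=0$ for $n\ll 0$, and from this extracts a natural graded monomorphism $H^i_{R_+}(M)_{\fp_0}\hookrightarrow H^1_{\fp_0(R_0)_{\fp_0}}\bigl(H^{i-1}_{R_+}(M/xM)_{\fp_0}\bigr)$ in all low degrees. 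Since $\dim((R_0/xR_0)_{\fp_0})\le 2$, the target is Artinian by \cite{BRS}, Theorem~5.10, and almost Artinianness of $H^i_{R_+}(M)_{\fp_0}$ follows from Definition and Remark~\ref{4.1}(B). So the finiteness of $Z$, the embedding into an $H^1$, and the dimension-drop from $3$ to $\le 2$ via $M/xM$ together do the work that your spectral-sequence sketch leaves undone; you would need to supply an argument of comparable precision in the non-torsion case before your route could be accepted.
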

\begin{proof}
(a): According to Reminder~\ref{3.1} (A) there is an element $x\in \fa^i(M) \setminus \{0\}$ such that $x{\Gamma}_{(x)}(M) = 0$.
If we apply Lemma~\ref{3.3} with $k=1$ to the the $R$-module $M/xM$ (also with $i-1$ instead of $i$)
and to the $R$-module ${\Gamma}_{(x)}(M)$ (with $i+1$ instead of $i$) we see that the three sets
$$\mathrm{Ass}_{R_0}(H^{i-1}_{R_{+}}(M/xM)_n)^{\leq 3}, \quad \mathrm{Ass}_{R_0}(H^i_{R_{+}}(M/xM)_n)^{\leq 3}, \quad
\mathrm{Ass}_{R_0}(H^i_{R_{+}}({\Gamma}_{(x)}(M)_n)^{\leq 3}$$
are asymptotically stable for $n \rightarrow - \infty$. So, there is a finite set $Z\subset \mathrm{Spec}(R_0)^{= 3}$ such that
$$\mathrm{Ass}_{R_0}(H^{i-1}_{R_{+}}(M/xM)_n)^{= 3} \cup \mathrm{Ass}_{R_0}(H^i_{R_{+}}(M/xM)_n)^{= 3} \cup
\mathrm{Ass}_{R_0}(H^{i+1}({\Gamma}_{(x)}(M)_n)^{=3} = Z$$
for all $n\ll 0$.
Let
$$\fp_0 \in \mathrm{Spec}(R_0)^{= 3} \setminus \big(Z \cup \overline{\fP^i(M)}\big).$$
We aim to show that the graded $R_{\fp_0}$-module $H^i_{R_{+}}(M)_{\fp_0}$ is almost Artinian. As
$\fp_0 \notin \overline{\fP^i(M)}$ and $\mathrm{height}(\fp_0) = 3$
it follows

$$ \mathrm{lenght}_{(R_0)_{\fp_0}}(H^i_{R_{+}}(M)_n)_{\fp_0}) < \infty$$

for all $n \ll 0$. As $\mathrm{dim}_{R_0}(H^{i-1}_{R_{+}}(M)_n) \leq 0$ for all $n \ll 0$ we also have

$$ \mathrm{length}_{(R_0)_{\fp_0}}(H^{i-1}_{R_{+}}(M)_n)_{\fp_0} < \infty$$

for all $n \ll 0$. As $\fp_0 \notin Z$ and $\mathrm{height}(\fp_0) = 3$, we also can say

$$ {\Gamma}_{\fp_0(R_0)_{\fp_0}}\big((H^{i-1}_{R_{+}}(M/xM)_n)_{\fp_0}\big) =
{\Gamma}_{\fp_0 (R_0)_{\fp_0}}\big((H^i_{R_{+}}(M/xM)_n)_{\fp_0}\big) =$$
$$={\Gamma}_{\fp_0(R_0)_{\fp_0}}\big((H^{i+1}_{R_{+}}({\Gamma}_{(x)}(M))_n)_{\fp_0}\big) = 0, \quad \forall n\ll 0.$$
Now, as in the proof of Proposition~\ref{3.8} (a), the canonical graded short exact sequences
$$0\longrightarrow M/{\Gamma}{(x)}(M)\stackrel{\phi}{\longrightarrow} M\longrightarrow M/xM \longrightarrow 0$$
and
$$0\longrightarrow {\Gamma}_{(x)}(M) \longrightarrow M\stackrel{\pi}{\longrightarrow} M/{\Gamma}_{(x)}(M) \longrightarrow 0$$
respectively imply exact sequences of $(R_0)_{\fp_0}$-modules
$$(H^{i-1}_{R_{+}}(M)_n)_{\fp_0}\longrightarrow (H^{i-1}_{R_{+}}(M/xM)_n)_{\fp_0} \longrightarrow$$
$$\longrightarrow (H^i_{R_{+}}(M/{\Gamma}_{(x)}(M))_n)_{\fp_0} \stackrel{(H^i_{R_{+}}(\phi)_n)_{\fp_0}}{\longrightarrow}
(H^i_{R_{+}}(M)_n)_{\fp_0} \longrightarrow (H^i_{R_{+}}(M/xM)_n)_{\fp_0}$$
and
$$(H^i_{R_{+}}(M)_n)_{\fp_0} \stackrel{(H^i_{R_{+}}(\pi)_n)_{\fp_0}}{\longrightarrow} (H^i_{R_{+}}(M/{\Gamma}_{(x)}(M))_n)_{\fp_0}
\longrightarrow (H^{i+1}_{R_{+}}({\Gamma}_{(x)}(M))_n)_{\fp_0}$$
for all $n\ll 0$. Keep in mind, that in the first of these sequences the first and the second but last module are of finite length
for all $n\ll 0$, whereas the second and the last module are ${\fp_0}(R_0)_{\fp_0}$-torsion-free for all $n\ll 0$. Observe further,
that in the second sequence the first module is of finite length and the last module is ${\fp_0}(R_0)_{\fp_0}$-torsion-free
for all $n\ll 0$. So there is an integer $n(x)$ such that for each $n\leq n(x)$ we have the exact sequence
$$0\longrightarrow(H^{i-1}_{R_{+}}(M/xM)_n)_{\fp_0} \longrightarrow (H^i_{R_{+}}(M/{\Gamma}_{(x)}(M))_n)_{\fp_0}
\stackrel{(H^i_{R_{+}}(\phi)_n)_{\fp_0}}{\longrightarrow} (H^i_{R_{+}}(M)_n)_{\fp_0}\longrightarrow 0$$
and the relation
$$\mathrm{Im}(H^i_{R_{+}}(\pi)_n)_{\fp_0}) = {\Gamma}_{\fp_0(R_0)_{\fp_0}}(H^i_{R_{+}}(M/{\Gamma}_{(x)}(M))_n)_{\fp_0}).$$
Thus, for all $n\leq n(x)$ the image of the composite map
$$(H^i_{R_{+}}(\pi)_n)_{\fp_0}\circ (H^i_{R_{+}}(\phi)_n)_{\fp_0}:
(H^i_{R_{+}}(M/{\Gamma}_{(x)}(M))_n)_{\fp_0} \longrightarrow (H^i_{R_{+}}(M/{\Gamma}_{(x)}(M))_n)_{\fp_0}$$
is the torsion module ${\Gamma}_{\fp_0(R_0)_{\fp_0}}((H^i_{R_{+}}(M/{\Gamma}_{(x)}(M))_n)_{\fp_0})$. As the composite map
$\pi \circ \phi: M/{\Gamma}_{(x)}(M) \longrightarrow M/{\Gamma}_{(x)}(M))$ coincides with the multiplication map
$x = x\mathrm{Id}_{M/{\Gamma}_{(x)}(M)}$ on $M/{\Gamma}_{(x)}(M)$ we end up with
$${\Gamma}_{\fp_0(R_0)_{\fp_0}}((H^i_{R_{+}}(M/{\Gamma}_{(x)}(M))_n)_{\fp_0}) = x(H^i_{R_{+}}(M/{\Gamma}_{(x)}(M))_n)_{\fp_0}, \quad
\forall n\leq n(x).$$
Now, without affecting ${\Gamma}_{(x)}(M)$ we may replace $x$ by $x^2$ and thus get the equalities
$$x(H^i_{R_{+}}(M/{\Gamma}_{(x)}(M)_n)_{\fp_0} = x^2(H^i_{R_{+}}(M/{\Gamma}_{(x)}(M))_n)_{\fp_0}$$
for all $n \leq m(x) :=\mathrm{min}\{n(x), n(x^2)\}$. Consequently, as $x \in \fp_0$ and as the $(R_0)_{\fp_0}$-modules
$(H^i_{R_{+}}(M/{\Gamma}_{(x)}(M))_n)_{\fp_0}$ are finitely generated, if follows that
$${\Gamma}_{\fp_0(R_0)_{\fp_0}}((H^i_{R_{+}}(M/{\Gamma}_{(x)}(M))_n)_{\fp_0}) = 0, \quad \forall n \ll 0.$$
Applying the functor ${\Gamma}_{\fp_0(R_0)_{\fp_0}}(\bullet)$ to the above short exact sequences and keeping in mind that the right hand
side module in these sequences is of finite length, we get the natural monomorphisms
$$0\longrightarrow (H^i_{R_{+}}(M)_n)_{\fp_0} \longrightarrow H^1_{\fp_0(R_0)_{\fp_0}}(H^{i-1}_{R_{+}}(M/xM)_n)_{\fp_0},
\quad \forall n\leq m(x).$$
It is easy to see, that these monomorphisms are the graded parts of a homomorphism of graded $R_{\fp_0}$-modules. Moreover, as
$\mathrm{dim}((R_0/xR_0)_{\fp_0}) \leq 2$ the graded $R_{\fp_0}$-module
$$H^1_{\fp(R_0)_{\fp_0}}(H^{i-1}_{R_{+}}(M/xM)_{\fp_0}) \cong
H^1_{{\fp_0}(R_0/xR_0)_{\fp_0}}(H^{i-1}_{{(R/xR)_{\fp_0}}_{+}}((M/xM)_{\fp_0}))$$
is Artinian (s. \cite{BRS} Theorem 5.10). In view of the observed monomorphisms and by Definition and Remark~\ref{4.1} (B), this
implies immediately, that the graded $R_{\fp_0}$-module $(H^i_{R_{+}}(M))_{\fp_0}$ is almost Artinian.

(b): This follows immediately from statement (a), Reminder and Remark~\ref{4.1} (D) and Remark~\ref{2.4} (B).
\end{proof}

This leads us immediately to the following observation.

\begin{cor}\label{4.5}
If $R_0$ is a domain and $i\in\mathbb{N}$ is such that the $R$-module $H^{i-1}_{R_{+}}(M)$ is almost Artinian, then the set of
all primes $\fp_0 \in \mathrm{Spec}(R_0)^{\leq 3}\setminus \overline{\fP^i(M)}$ for which the graded $R_{\fp_0}$-module
$H^i_{R_{+}}(M)_{\fp_0}$ is not almost almost Artinian as well as the set $\mathrm{Spec}(R_0)^{\leq 3} \setminus \ft^i(M)$
are both finite subsets of $\mathrm{Spec}(R_0)^{= 3}$.
\end{cor}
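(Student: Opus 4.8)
The plan is to deduce the corollary from Theorem~\ref{4.4}, adding only a short discussion of the primes of height $\le 2$; the substantive work is already contained in Theorem~\ref{4.4}. Since the graded $R$-module $H^{i-1}_{R_{+}}(M)$ is almost Artinian, Definition and Remark~\ref{4.1} (C) gives $\dim_{R_0}(H^{i-1}_{R_{+}}(M)_n) \le 0$ for all $n \ll 0$, so the hypothesis of Theorem~\ref{4.4} is met. That theorem then provides a finite set $Z \subset \mathrm{Spec}(R_0)^{=3}$ such that the graded $R_{\fp_0}$-module $H^i_{R_{+}}(M)_{\fp_0}$ is almost Artinian for every $\fp_0 \in \mathrm{Spec}(R_0)^{=3} \setminus (Z \cup \overline{\fP^i(M)})$, and it also gives that $\mathrm{Spec}(R_0)^{\le 3} \setminus \ft^i(M)$ is a finite subset of $\mathrm{Spec}(R_0)^{=3}$. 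The second assertion of the corollary is thus already Theorem~\ref{4.4} (b), so it remains only to control the first set.

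The key remaining point is that for every prime $\fp_0 \in \mathrm{Spec}(R_0)^{\le 2} \setminus \overline{\fP^i(M)}$ the graded $R_{\fp_0}$-module $H^i_{R_{+}}(M)_{\fp_0}$ vanishes in all degrees $n \ll 0$, and hence is almost Artinian (take $N$ to be the whole module in Definition and Remark~\ref{4.1} (A)). I would prove this by distinguishing the position of $i$ relative to $f(M)$ and invoking Reminder and Remark~\ref{2.2} (D): for $i < f(M)$, part (i) gives $H^i_{R_{+}}(M)_n = 0$ for all $n \le n^i(M)$; for $i > f(M)$, part (iii) gives $\mathrm{Supp}_{R_0}(H^i_{R_{+}}(M)_n)^{\le 2} = \overline{\fP^i(M)}^{\le 2}$ for all $n \le n^i(M)$, whence $\fp_0 \notin \mathrm{Supp}_{R_0}(H^i_{R_{+}}(M)_n)$ and so $(H^i_{R_{+}}(M)_n)_{\fp_0} = 0$ for all $n \ll 0$. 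The case $i = f(M)$ is the only one requiring care: part (ii) gives $\mathrm{Supp}_{R_0}(H^{f(M)}_{R_{+}}(M)_n) = \overline{\fu(M)}$ for all $n \le n(M)$, and combining parts (C) and (D) one identifies $\fP^{f(M)}(M) = \fu(M)^{\le 2}$; then a prime $\fp_0$ of height $\le 2$ lying in $\overline{\fu(M)}$ would contain some $\fq_0 \in \fu(M)$, necessarily of height $\le 2$, i.e. $\fq_0 \in \fu(M)^{\le 2} = \fP^{f(M)}(M)$, forcing $\fp_0 \in \overline{\fP^{f(M)}(M)}$, contrary to hypothesis; hence $\fp_0 \notin \overline{\fu(M)} = \mathrm{Supp}_{R_0}(H^{f(M)}_{R_{+}}(M)_n)$ for $n \le n(M)$, and again the localization vanishes in all low degrees.

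Putting the two steps together: if $\fp_0 \in \mathrm{Spec}(R_0)^{\le 3} \setminus \overline{\fP^i(M)}$ and $H^i_{R_{+}}(M)_{\fp_0}$ is not almost Artinian, then by the second step $\mathrm{height}(\fp_0) = 3$, and then by the first step $\fp_0 \in Z$; so the set of such $\fp_0$ is contained in the finite set $Z \subseteq \mathrm{Spec}(R_0)^{=3}$, which is exactly the first assertion. (One could even recover the finiteness of $\mathrm{Spec}(R_0)^{\le 3} \setminus \ft^i(M)$ from this, using that a non-tame localization is not almost Artinian by Definition and Remark~\ref{4.1} (D) and that $\overline{\fP^i(M)} \subseteq \ft^i(M)$ by Remark~\ref{2.4} (B).) I expect no obstacle beyond the $i = f(M)$ bookkeeping, which is just the routine identification of $\fu(M)$ with $\fP^{f(M)}(M)$ in codimension $\le 2$.
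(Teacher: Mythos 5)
Your proof is correct and follows the paper's own route, reducing to Theorem~\ref{4.4} via Definition and Remark~\ref{4.1}~(C); you also correctly fill in the detail left implicit by the paper's one-line proof, namely that primes of height $\leq 2$ outside $\overline{\fP^i(M)}$ give a vanishing, hence trivially almost Artinian, localization in all low degrees. Your case split on the position of $i$ relative to $f(M)$ works but is more than necessary: for any $i$ and $n \leq n^i(M)$ one has $\mathrm{Supp}_{R_0}(H^i_{R_+}(M)_n)^{\leq 2} = \overline{\fP^i(M)}^{\leq 2}$ simply because each $H^i_{R_+}(M)_n$ is a finitely generated $R_0$-module, so its support is the closure of its associated primes.
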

\begin{proof} This is immediate by Theorem~\ref{4.4} and Definition and Remark~\ref{4.1} (C).
\end{proof}



\end{document}